\theoremstyle{plain}
\newtheorem{theorem}{Theorem}[section]
\newtheorem{prop}[theorem]{Proposition}
\newtheorem{lem}[theorem]{Lemma}
\newtheorem{corol}[theorem]{Corollary}
\newtheorem{conj}[theorem]{Conjecture}
\theoremstyle{definition}
\newtheorem{defi}[theorem]{Definition}
\newtheorem{rmq}[theorem]{Remark}
\newtheorem{exmp}[theorem]{Example}
\def\coker{{rm{coker}\,}}
\def\Gr{{\rm{Gr}}}
\def\St{{\rm{St}}}
\def\dim{{\rm{dim}\,}}
\def\dimk{{\rm{dim}_\k}}
\def\ddim{{\mathbf{dim}\,}}
\def\ddimC{{\mathbf{dim}_{\CC}\,}}
\def\rep{{\rm{rep}}}
\def\<{\left<}
\def\>{\right>}
\def\d{{\mathbf d}}
\def\k{{\mathbf{k}}}
\def\kQ{\mathbf{k}Q}
\def\ens#1{\left\{ #1 \right\}}
\def\fl{{\longrightarrow}\,}
\def\CC{{\mathcal{C}}}
\def\Q{{\mathbb{Q}}}
\def\Z{{\mathbb{Z}}}
\def\T{{\mathcal {T}}}
\def\add{{\rm{add}}\,}
\def\ind{{\rm{ind}}}
\def\coind{{\rm{coind}}}
\def\split{{\rm{split}}}
\def\ens#1{\left\{ #1 \right\}}
\def\Ext{{\rm{Ext}}}
\def\End{{\rm{End}}}
\def\Hom{{\rm{Hom}}}
\def\Aut{{\rm{Aut}}}
\def\Ob{{\rm{Ob}}}
\def\cone{{\rm{Cone}}\,}
\def\modg{{\textrm{-mod}\,}}
\def\projg{{\textrm{-proj}\,}}
\def\id{{\text{id}}}
\def\op{{\rm{op}\,}}
\def\min{{\rm{min}\,}}
\def\Alin#1{\overrightarrow{\mathbb A}_{#1}}
\def\repBA#1{\rep(B_T\overrightarrow{\mathbb A}_{4},#1)}
\def\ker{{\rm{Ker}}}
\def\coker{{\rm{Coker}}}
\title{Generic cluster characters}
\author{G. Dupont}
\address{
	Universit\'e de Sherbrooke\\
	2500, Boul. de l'Universit\'e\\
	J1K 2R1, Sherbrooke, QC, Canada.
}
\email{gregoire.dupont@usherbrooke.ca}
\urladdr{
	http://pages.usherbrooke.ca/gdupont2
}
\begin{document}

\begin{abstract}
	Let $\CC$ be a Hom-finite triangulated 2-Calabi-Yau category with a cluster-tilting object $T$. Under a constructibility condition we prove the existence of a set $\mathcal G^T(\CC)$ of generic values of the cluster character associated to $T$. If $\CC$ has a cluster structure in the sense of Buan-Iyama-Reiten-Scott, $\mathcal G^T(\CC)$ contains the set of cluster monomials of the corresponding cluster algebra. Moreover, these sets coincide if $\mathcal C$ has finitely many indecomposable objects. 

	When $\CC$ is the cluster category of an acyclic quiver and $T$ is the canonical cluster-tilting object, this set coincides with the set of generic variables previously introduced by the author in the context of acyclic cluster algebras. In particular, it allows to construct $\Z$-linear bases in acyclic cluster algebras.
\end{abstract}

\maketitle

\setcounter{tocdepth}{1}
\tableofcontents

\section{Introduction and main results}
	\subsection{Cluster algebras}
		Cluster algebras were introduced in \cite{cluster1} in order to design a combinatorial framework for studying total positivity in algebraic groups and canonical bases in quantum groups. It is expected, and proved in some cases, that a cluster algebra has distinguished linear bases providing combinatorial models for canonical or semicanonical bases in quantum groups \cite{cluster1,GLS:KMgroups,Nakajima:cluster, Lampe:Kronecker}. Besides this question, cluster algebras have shown interactions with various areas of mathematics like Lie theory, combinatorics, Teichm\"uller theory, Poisson geometry or representation theory of algebras. This last connection with representation theory was particularly fruitful for the construction of linear bases in a large class of cluster algebras \cite{CZ,CK1,Cerulli:A21,Dupont:BaseAaffine,DXX:basesv3,GLS:KMgroups,GLS:generic}. 

		In full generality, a (coefficient-free) \emph{cluster algebra} can be associated to any pair $(Q,\mathbf x)$ where $Q=(Q_0,Q_1)$ is a quiver and $\mathbf x=(x_i|i \in Q_0)$ is a $Q_0$-tuple of indeterminates over $\Z$. By a quiver $Q=(Q_0,Q_1)$, we always mean an oriented graph such that $Q_0$ is the (finite) set of vertices and $Q_1$ is the (finite) set of arrows. Moreover, we always assume that a quiver $Q$ does not contain any loops or 2-cycles. Given such a pair $(Q,\mathbf x)$, we denote by $\mathcal A(Q,\mathbf x)$ the corresponding cluster algebra. 

		$\mathcal A(Q,\mathbf x)$ is a subalgebra of the algebra $\Z[\mathbf x^{\pm 1}]$ of Laurent polynomials in the $x_i$ with $i \in Q_0$. It is equipped with a distinguished set of generators called \emph{cluster variables}, gathered into possibly overlapping sets of fixed cardinality called \emph{clusters}, generated by a recursive process called \emph{mutation}. Monomials in variables belonging all to a same cluster are called \emph{cluster monomials} and we denote by $\mathcal M(Q,\mathbf x)$ the set of all cluster monomials in $\mathcal A(Q,\mathbf x)$. 

		A \emph{$\Z$-basis} in the cluster algebra $\mathcal A(Q,\mathbf x)$ is a free generating set of $\mathcal A(Q,\mathbf x)$ viewed as a $\Z$-module. If $\mathcal A(Q,\mathbf x)$ has finitely many cluster variables, then the set $\mathcal M(Q,\mathbf x)$ of cluster monomials is a $\Z$-basis in $\mathcal A(Q,\mathbf x)$ \cite{CK1}. If $\mathcal A(Q,\mathbf x)$ has infinitely many cluster variables it was observed that cluster monomials do not span the cluster algebra as a $\Z$-module \cite{shermanz}. However, it is conjectured in full generality, and proved in several cases (see for instance \cite{GLS:KMgroups,Plamondon:ClusterAlgebras}), that cluster monomials are linearly independent over $\Z$. The aim of this article is to provide, for a wide class of cluster algebras, a general construction of a distinguished family of elements in $\Z[\mathbf x^{\pm 1}]$ containing naturally the set of cluster monomials in $\mathcal A(Q,\mathbf x)$ and which is expected to form a $\Z$-basis in the cluster algebra $\mathcal A(Q,\mathbf x)$.

	\subsection{Triangulated 2-Calabi-Yau realisations}
		Let $\CC$ be a triangulated 2-Calabi-Yau category over an algebraically closed field $\k$ such that cluster-tilting subcategories in $\CC$ determine a cluster structure in the sense of \cite{BIRS} (see Section \ref{ssection:clusterstructures} for details). Let $T$ be a cluster-tilting object in $\CC$. It is known that the set of indecomposable rigid objects which are reachable from $T$ is in bijection with the set of cluster variables in $\mathcal A(Q_{T},\mathbf x)$ where $Q_{T}$ is the ordinary quiver of the so-called \emph{2-Calabi-Yau tilted} algebra $\End_{\CC}(T)^{\op}$ \cite{BIRS} (see Section \ref{section:preliminaires} for details). This bijection can be made explicit using the so-called \emph{cluster character} 
		$$X^{T}_?:\Ob(\CC) \fl \Z[\mathbf x^{\pm 1}]$$
		introduced in \cite{Palu} whose definition is recalled in Section \ref{ssection:characters}. 

		When $\CC=\CC_Q$ is the cluster category of an acyclic quiver and $T=\kQ$ is the canonical cluster-tilting object, the cluster character $X^{T}_?$ coincides with the \emph{Caldero-Chapoton map} $CC$ introduced in \cite{CC,CK2}. In \cite{Dupont:genericvariables}, the author introduced and studied generic values of restrictions of the Caldero-Chapoton map to representation spaces under the name of \emph{generic variables}. It is known that these generic variables form a $\Z$-basis in the cluster algebra $\mathcal A(Q,\mathbf x)$ \cite{Dupont:BaseAaffine, DXX:basesv3,GLS:generic}. In this article, we generalise this construction to cluster algebras which can be realised with triangulated 2-Calabi-Yau categories and we conjecture that it still provides a $\Z$-linear basis of the corresponding cluster algebra.
	
	\subsection{Main results}
		If $\k$ is the field of complex numbers and $\CC$ has constructible cones with respect to $\add T$-morphisms (see Section \ref{ssection:constructibility} for details), we associate to any element $\gamma \in K_0(\add T)$ a Laurent polynomial $X(\gamma)$ by taking the character of the cone of a generic morphism in $\End_{\CC}(T)^{\op}$-mod with $\delta$-vector $\gamma$ in the sense of \cite{DF:generalpresentations}. The set 
		$$\mathcal G^{T}(\CC)=\ens{X(\gamma)|\gamma \in K_0(\add T)} \subset \Z[\mathbf x^{\pm 1}]$$
		is called the set of \emph{generic characters associated to $T$ in $\CC$}. 

		Theorem \ref{theorem:clustermonomials} asserts that the set $\mathcal G^{T}(\CC)$ of generic characters contains naturally the set of cluster monomials (and thus of cluster variables) in the cluster algebra $\mathcal A(Q_{T},\mathbf x)$.  

		If $\CC$ has finitely many indecomposable objects, we prove that the set $\mathcal G^{T}(\CC)$ coincides with the set $\mathcal M(Q_{T},\mathbf x)$ and thus that provides a $\Z$-linear basis of the finite type cluster algebra $\mathcal A(Q_{T},\mathbf x)$ (Theorem \ref{theorem:finitetype} and Corollary \ref{corol:basetypefini}).

		When $\CC=\CC_Q$ is the cluster category of an acyclic quiver $Q$ and $T=\kQ$ is the canonical cluster-tilting object, we prove that $\mathcal G^{T}(\CC)$ satisfies multiplicative properties compatible with the virtual generic decomposition of \cite{IOTW} (Theorem \ref{theorem:multvirtual}). 

		In this case, we also prove that the set $\mathcal G^{T}(\CC)$ coincides with the set $\mathcal G(Q)$ of generic variables introduced in \cite{Dupont:BaseAaffine}. In particular, it provides a $\Z$-basis in the acyclic cluster algebra $\mathcal A(Q,\mathbf x)$ (Proposition \ref{prop:XetCC} and Corollary \ref{corol:acyclic}).

	\subsection{Organisation of the paper}
		In Section \ref{section:preliminaires}, we recall the necessary background concerning triangulated 2-Calabi-Yau realisations of cluster algebras, cluster structures, cluster characters and constructibility of cones. In Section \ref{section:genericcharacters}, we define generic cluster characters in full generality and connect this construction to general presentations of modules introduced in \cite{DF:generalpresentations}. In Section \ref{section:monomials} we prove that the set of generic cluster characters naturally contains the cluster monomials of the corresponding cluster algebra. In the sequel, we focus on the case of cluster categories associated to acyclic quivers. In Section \ref{section:index}, we relate indices and dimension vectors in cluster categories in order to describe a natural parametrisation of generic characters using $\mathbf g$-vectors. In Section \ref{section:genericdcp}, we prove the compatibility of generic characters with the virtual generic decomposition of \cite{IOTW}. In Section \ref{section:XetCC}, we prove that this construction indeed generalises the construction of generic variables given in \cite{Dupont:genericvariables} and provides a $\Z$-basis for acyclic cluster algebras. Section \ref{section:example} presents a detailed example in a cluster category of type $A_3$ and finally, Section \ref{section:conjectures} states conjectures and questions relative to this construction.
	
\section{Preliminaries}\label{section:preliminaires}
	Throughout the article, $\k$ is the field of complex numbers.
	\subsection{Triangulated 2-Calabi-Yau categories}\label{ssection:2CY}
		 Without other specification, $\CC$ will always denote a $\k$-linear triangulated category with suspension functor $[1]$ which is assumed to be~:
		\begin{itemize}
			\item \emph{Hom-finite}~: $\dimk \Hom_{\CC}(M,N)<\infty$ for any objects $M,N$ in $\CC$~;
			\item \emph{2-Calabi-Yau}~: there is a bifunctorial isomorphism
				$$\Hom_{\CC}(M,N) \simeq D\Hom_{\CC}(N,M[2])$$
				for any two objects $M,N$ in $\CC$ where $D = \Hom_{\k}(-,\k)$ is the standard duality. 
			\item with split idempotents.
		\end{itemize}

		A \emph{cluster-tilting subcategory} in $\CC$ is a full additive subcategory $\mathcal T$ of $\mathcal C$ which is stable under direct factors and such that~:
		\begin{itemize}
			\item the functors $\Hom_{\CC}(X,-):\mathcal T \fl \k\modg$ and $\Hom_{\CC}(-,X):\mathcal T^{\op} \fl \k\modg$ are finitely presented for any object $X$ in $\CC$~;
			\item an object $X$ in $\mathcal C$ belongs to $\mathcal T$ if and only if $\Ext^1_{\mathcal C}(T,X)=0$ for any object $T$ in $\mathcal T$.
		\end{itemize}

		A \emph{cluster-tilting object} $T$ in $\CC$ is a basic object such that $\add T$ is a cluster-tilting subcategory of $\mathcal C$. Equivalently, $T$ is a cluster-tilting object if it is basic, rigid (that is, without self-extension) and such that for any $X$ in $\CC$, $\Ext^1_{\CC}(T,X)=0$ implies $X \in \add T$. 

		Throughout the article, we always assume that $\CC$ contains a cluster-tilting object $T$ with $n$ distinct indecomposable summands. We denote by $Q_{T}$ the ordinary quiver of the 2-Calabi-Yau-tilted algebra $B_{T}=\End_{\CC}(T)^{\op}$ and by $\mathcal T$ the additive category $\add T$. Any two cluster-tilting objects giving rise to the same cluster-tilting subcategory are isomorphic and thus, up to isomorphism, the quiver $Q_T$ only depends on $\mathcal T$ and we will sometimes denote it by $Q_{\mathcal T}$. 

		\begin{exmp}
			Let $Q$ be an acyclic quiver and let $\CC_Q$ be the \emph{cluster category of $Q$} introduced in \cite{BMRRT} (see also \cite{CCS1} for quivers of Dynkin type $\mathbb A$). Then $\CC_Q$ satisfies all the above hypotheses. Moreover, the path algebra $\kQ$ of $Q$ can be identified with a cluster-tilting object in $\CC_Q$ and it is called the \emph{canonical cluster-tilting object in $\CC_Q$}.
		\end{exmp}

		\begin{exmp}
			Let $(Q,W)$ be a quiver with potential in the sense of \cite{DWZ:potentials}. Assume that $(Q,W)$ is Jacobi-finite, that is, the corresponding Jacobian algebra $\mathcal J_{(Q,W)}$ is finite dimensional. Then the \emph{generalised cluster category} $\CC_{(Q,W)}$ introduced in \cite[\S 3]{Amiot:clustercat} satisfies all the above hypotheses. Moreover, it contains a cluster-tilting object $T$ such that the corresponding 2-Calabi-Yau-tilted algebra is isomorphic to $\mathcal J_{(Q,W)}$.
		\end{exmp}
		
	\subsection{$\mathcal T$-morphisms}\label{ssection:Tmorphisms}
		\begin{defi}
			A \emph{$\mathcal T$-morphism} in $\CC$ is a morphism $f \in \Hom_{\CC}(T_1,T_0)$ for some objects $T_0,T_1$ in $\mathcal T$.
		\end{defi}

		We denote by $F_{T}$ the functor 
		$$F_{T}=\Hom_{\CC}(T,-): \CC \fl B_{T}\modg$$
		inducing an equivalence of categories 
		$$\CC/(T[1]) \xrightarrow{\sim} B_{T}\modg$$
		where $B_{T}\modg$ is the category of finitely generated left $B_{T}$-modules \cite{BMR1,KR:clustertilted}. Note that projective $B_{T}$-modules are given by the $F_{T}M$ where $M$ runs over $\mathcal T$.

		We denote by $K_0(\mathcal T)$ the Grothendieck group of the additive category $\mathcal T$ and for any object $M$ in $\mathcal T$, we denote by $[M]$ its class in $K_0(\mathcal T)$.

		As shown in \cite{KR:acyclic}, for any object $M$ in $\CC$, there exist (non-unique) triangles
		$$T_1^M \fl T_0^M \fl M \fl T_1^M[1],$$
		$$M \fl T_M^0[2] \fl T_M^1[2] \fl M[1]$$
		with $T_i^M,T^i_M$ in $\mathcal T$ for any $i \in \ens{1,2}$.

		Following \cite{Palu}, the \emph{index} of $M$ (with respect to $\mathcal T$) is
		$$\ind_{\mathcal T}(M)=[T_0^M]-[T_1^M]$$
		and the \emph{coindex} of $M$ (with respect to $\mathcal T$) is
		$$\coind_{\mathcal T}(M)=[T^0_M]-[T^1_M].$$
		Note that the index and the coindex of $M$ are well-defined elements in $K_0(\mathcal T)$ in the sense that they do not depend on the choice of the above triangles \cite[Lemma 2.1]{Palu}. In particular, the map $T_0^M \fl M$ (resp. $M \fl T^M_0[2]$) is not necessarily a minimal right $\mathcal T$-approximations (resp. left $\mathcal T[2]$-approximation).

	\subsection{Cluster structures}\label{ssection:clusterstructures}
		The notion of cluster structure was first introduced in \cite{BIRS} in order to design a framework for cluster mutations in 2 Calabi-Yau triangulated categories. We thus say that \emph{cluster-tilting subcategories of $\mathcal C$ determine a cluster structure on $\mathcal C$} if~:
		\begin{enumerate}
			\item For each cluster-tilting subcategory $\mathcal T'$ of $\CC$ and each indecomposable object $M$ of $\mathcal T'$, there is a unique (up to isomorphism) indecomposable $M^*$ not isomorphic to $M$ such that the additive subcategory $\mu_M(\mathcal T')$ of $\mathcal C$ with set of indecomposables $\ind(\mathcal T' \setminus \ens {M}) \sqcup \ens{M^*}$ is a cluster-tilting subcategory, called \emph{mutation of $\mathcal T'$ at $M$}.
			\item In the above situation, there are triangles
			$$M^* \xrightarrow f E \xrightarrow g M \rightarrow M^*[1] \textrm{ and } M \xrightarrow{f'} E' \xrightarrow{g'} M^* \rightarrow M[1]$$
			where $f,f'$ are minimal left $\mathcal T' \cap \mu_M(\mathcal T')$-approximations and $g,g'$ are minimal right $\mathcal T' \cap \mu_M(\mathcal T')$-approximations.
			\item For any cluster-tilting subcategory $\mathcal T'$ of $\CC$, the quiver $Q_{\mathcal T'}$ has no loops or 2-cycles.
			\item For any cluster-tilting subcategory $\mathcal T'$ of $\CC$ and any indecomposable object $M$ in $\mathcal T'$, we have $Q_{\mu_M(\mathcal T')}=\mu_M(Q_{\mathcal T})$.
		\end{enumerate}

		The \emph{mutation} of a cluster-tilting object $T'$ in such a category $\CC$ is defined via the mutation of the corresponding cluster-tilting subcategory $\add T'$. We say that a cluster-tilting subcategory $\mathcal T'$ in $\CC$ is \emph{reachable from $T$} if it can be obtained from $\add T$ by a finite number of mutations. An arbitrary object $M$ in $\CC$ is called \emph{reachable from $T$} if it belongs to  a cluster-tilting subcategory of $\CC$ which is reachable from $\mathcal T$. In particular, an object reachable from $T$ is always rigid. 

		\begin{exmp}
			If $Q$ is an acyclic quiver, $\CC=\CC_Q$ is the cluster category of $Q$ and $T=\kQ$, then it is known that cluster-tilting subcategories determine a cluster structure on $\mathcal C$ and that every rigid object is reachable from $T$ \cite{BMRRT,BMR2}.
		\end{exmp}

		\begin{exmp}
			If $(Q,W)$ is a Jacobi-finite quiver with potential which is non-degenerate, then the cluster-tilting subcategories of $\CC_{(Q,W)}$ form a cluster structure on $\CC_{(Q,W)}$ \cite{Amiot:clustercat}. Moreover, if $(Q,W)$ comes from an unpunctured surface in the sense of \cite{ABCP}, then every rigid object is reachable from any cluster-tilting object in $\CC_{(Q,W)}$ \cite{BZ:clustercatsurfaces}.
		\end{exmp}

	\subsection{Constructible cones}\label{ssection:constructibility}
		Given a morphism $f \in \Hom_{\CC}(M,N)$ with $M,N$ objects in $\CC$, the \emph{cone} of $f$ is the unique (up to isomorphism) object $\cone(f)$ in $\CC$ such that there exists a triangle
		$$M \xrightarrow{f} N \fl \cone(f) \fl M[1].$$

		For the geometric considerations in this article we will moreover always assume that the category $\CC$ has \emph{constructible cones with respect to $\mathcal T$-morphisms}. This notion is a weak analogue to the constructibility of the cones first introduced in \cite{Palu:multiplication}. We recall it here.

		Let $\Alin 4$ be the quiver $1\fl 2\fl 3\fl 4$. For any $\d=(d_1,d_2,d_3,d_4) \in \Z_{\geq 0}^4$, a \emph{$\d$-dimensional matrix representation} of $\Alin 4$ in $B_T$-mod is given by~:
		\begin{enumerate}
			\item a left $B_T$-module structure on $\k^{d_i}$ for every $1 \leq i \leq 4$~;
			\item a $B_T$-linear map $\k^{d_i} \fl \k^{d_{i+1}}$ for any $1 \leq i \leq 3$.
		\end{enumerate}
		The set of $\d$-dimensional representations in $B_T$-mod of $\Alin 4$ form an affine variety, denoted by $\repBA{\d}$. The group $GL(\d) = \prod_{i=1}^4 GL(d_i,\k)$ acts by base change and the set of orbits is denoted by $\repBA{\d}/GL(\d)$.

		Let $T_0$ and $T_1$ be objects in $\mathcal T$. Let 
		$$\d_0 = (\dimk F_T T_1, \dimk F_TT_0, \dimk F_T T_0 -\dimk F_T T_1, \dimk F_TT_1[1])$$
 		and let $\Phi_{T_1,T_0}$ be the map from $\Hom_{\CC}(T_1,T_0)$ to
 		$$
 		\coprod_{\d\leq \d_0} \repBA{\d}/GL(\d)
 		$$
 		sending a morphism $T_1 \xrightarrow f T_0$ to the orbit of the exact sequence of $B_T$-modules
		$$F_T T_1 \xrightarrow{F_T f} F_T T_0 \xrightarrow{F_T p} F_T \cone(f) \xrightarrow{F_T \epsilon} F_T T_1[1]$$
		where $T_1 \xrightarrow{f} T_0 \xrightarrow{p} \cone(f) \xrightarrow{\epsilon} T_1[1]$ is a triangle in $\CC$. If the map $\Phi_{T_1,T_0}$ lifts to a constructible map 
		$$\phi_{T_1,T_0} : \Hom_{\CC}(T_1,T_0) \fl \coprod_{\d\leq \d_0} \repBA{\d}$$
		for any objects $T_1$ and $T_0$ in $\mathcal T$, then we say that $\CC$ has \emph{constructible cones with respect to $\mathcal T$-morphisms}.

		\begin{exmp}
			Let $(Q,W)$ be a Jacobi-finite quiver with potential, let $\CC_{(Q,W)}$ be the associated generalised cluster category and let $T$ be any cluster-tilting object in $\CC_{(Q,W)}$. Then a direct adaptation of \cite[\S 2.5]{Palu:multiplication} shows that $\CC_{(Q,W)}$ has constructible cones with respect to $\mathcal T$-morphisms. If in particular $Q$ is acyclic and $T$ is any cluster-tilting object in the cluster category $\CC_Q$, then $\CC_Q$ has constructible cones with respect to $\mathcal T$-morphisms.
		\end{exmp}

	\subsection{Cluster characters}\label{ssection:characters}
		From now on, $\mathbf x=(x_1, \ldots, x_n)$ denotes a $n$-tuple of indeterminates over $\Z$ where $n$ is still the number of indecomposable direct summands of the considered cluster-tilting object $T$ in $\CC$. We use the short-hand notation $\Z[\mathbf x^{\pm 1}]$ for the ring $\Z[x_1^{\pm 1}, \ldots, x_n^{\pm 1}]$ of Laurent polynomials in the variables $x_1, \ldots, x_n$.

		Given a $B_{T}$-module $M$ and $\mathbf e \in K_0(B_{T}\modg)$, we denote by $\Gr_{\mathbf e}(M)$ the variety of sub-$B_{T}$-modules of $M$ whose class in $K_0(B_{T}\modg)$ equals $\mathbf e$. This is a projective variety and we denote by $\chi(\Gr_{\mathbf e}(M))$ its Euler-Poincar\'e characteristic with respect to the singular cohomology with rational coefficients.

		Let $\<-,-\>$ be the bilinear form on the split Grothendieck group $K_0(B_{T}\modg)^{\split}$ induced by 
		$$\<M,N\>=\dimk \Hom_{B_{T}}(M,N)-\dimk \Ext^1_{B_{T}}(M,N)$$
		for any two $B_T$-modules $M$ and $N$. It is well-defined on the Grothendieck group $K_0(B_{T}\modg)$ if $B_{T}$ is hereditary but not in general.

		For any $i \in \ens{1, \ldots, n}$, let $S_i$ be the simple $B_{T}$-module associated to $i$. Then the linear form
		$$\<S_i,-\>_a:M \mapsto \<S_i,M\>-\<M,S_i\>$$
		is well-defined on $K_0(B_{T}\modg)$ \cite[Lemma 1.3]{Palu}.

		\begin{defi}[\cite{Palu}]
			The \emph{cluster character associated to $T$} is the map 
			$$X^{T}_?: \Ob(\CC) \fl \Z[\mathbf x^{\pm 1}]$$ defined as follows.
			If $M$ is indecomposable in $\CC$ then 
			$$X^{T}_M=\left\{\begin{array}{l}
					x_i \textrm{ if } M \simeq T_i[1]~; \\
					\displaystyle \sum_{\mathbf e \in K_0(B_{T}\modg)} \chi(\Gr_{\mathbf e}(F_{T}M)) \prod_{i=1}^n x_i^{\<S_i,\mathbf e\>_a-\<S_i,F_{T}M\>} \textrm{ otherwise.}
			\end{array}\right.$$
			and for any two objects $M_1,M_2$ in $\CC$, we set 
			$$X^T_{M_1 \oplus M_2} = X^T_{M_1}X^T_{M_2}.$$
		\end{defi}

		If cluster-tilting subcategories in $\CC$ determine a cluster structure on $\CC$, it is proved in \cite{FK} that the set of $X^{T}_M$ where $M$ runs over the isoclasses of indecomposable rigid objects in $\CC$ which are reachable from $T$ equals the set of cluster variables in $\mathcal A(Q_{T},\mathbf x)$ and moreover,
		$$\ens{X^{T}_M\, |\, M \textrm{ is rigid and reachable from $T$ in $\CC$}}=\mathcal M(Q_{T},\mathbf x).$$

\section{Generic cluster characters for 2-Calabi-Yau triangulated categories}\label{section:genericcharacters}
	\subsection{Generic characters}
		In \cite{Dupont:genericvariables}, we observed that the Caldero-Chapoton map takes generic values on the representation varieties associated to $Q$. For an arbitrary triangulated 2-Calabi-Yau category $\CC$, there is in general no obvious ``nice'' geometry on $\Ob(\CC)$. However, since $\CC$ is $\k$-linear and Hom-finite, Hom-spaces in $\CC$ are finite dimensional $\k$-vector spaces and in particular, they are irreducible affine varieties. Thus, for geometric statements, it is more convenient to consider morphisms instead of objects. This philosophy was for instance already suggested in \cite{IOTW,DF:generalpresentations}.

		\begin{defi}
			For any objects $T_0,T_1$ in $\mathcal T$ and $f \in \Hom_{\CC}(T_1,T_0)$, we set 
			$$X:\left\{\begin{array}{rcl}
			           	\Hom_{\CC}(T_1,T_0) & \fl & \Z[\mathbf x^{\pm 1}]\\
					f & \mapsto & X^{T}_{\cone(f)}
			\end{array}\right.$$
		\end{defi}

		The group $\Aut_{\CC}(T_0)^{\op} \times \Aut_{\CC}(T_1)$ acts on $\Hom_{\CC}(T_1,T_0)$ by
		$$(g_0,g_1).f=g_0 f g_1$$
		and the map $X$ is invariant under this action. 

		If $M$ is a rigid object in $\CC$, consider a triangle 
		$$T^M_1 \xrightarrow{f} T^M_0 \fl M \fl T^M_1[1]$$
		with $T^M_0,T^M_1$ in $\mathcal T$. Then, it follows from \cite[\S 2.1]{DK:2CY} that the orbit of $f$ under this action is a dense open subset in $\Hom_{\CC}(T^M_1,T^M_0)$ so that $X$ is constant over a dense open subset of $\Hom_{\CC}(T^M_1,T^M_0)$. The following lemma proves that such a dense open subset actually exists for any two objects $T_0,T_1$ in $\mathcal T$.

		\begin{lem}\label{lem:Xgen}
			Let $T_0,T_1$ be objects in $\mathcal T$. Then there exists a unique Laurent polynomial $X(T_1,T_0) \in \Z[\mathbf x^{\pm 1}]$ such that $X$ is constant equal to $X(T_1,T_0)$ on a Zariski dense open subset $U_{(T_1,T_0)} \subset \Hom_{\CC}(T_1,T_0)$.
		\end{lem}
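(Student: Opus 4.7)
The plan is to exploit the irreducibility of $V := \Hom_{\CC}(T_1,T_0)$---a finite-dimensional $\k$-vector space, hence an irreducible affine variety---together with the constructibility of $\phi_{T_1,T_0}$, and to show that $f \mapsto X^T_{\cone(f)}$ is a constructible $\Z[\mathbf x^{\pm 1}]$-valued function on $V$ with finite image. Existence of a dense open subset on which $X$ is constant then follows from irreducibility of $V$, and uniqueness from the fact that any two dense open subsets of $V$ meet.

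First I would partition $V$ according to the dimension vector $\d$ of the 4-term exact sequence in $B_T\modg$ recorded by $\phi_{T_1,T_0}$. Since $\d \leq \d_0$, this gives a finite decomposition of $V$ into constructible pieces $V_\d$; by irreducibility of $V$, exactly one piece $V_{\d^*}$ is dense. Restricting to a dense open $U\subset V_{\d^*}$ on which $\phi_{T_1,T_0}$ is a regular morphism to $\rep(B_T\Alin{4},\d^*)$ and composing with the projection onto the third factor yields a regular morphism $\psi : U \fl \rep(B_T, d_3^*)$ sending $f$ to (a representative of) the $B_T$-module $F_T\cone(f)$. Decomposing $\cone(f) = M_f \oplus T'_f[1]$ with $M_f$ free of $T[1]$-summands, the cluster character rewrites as
\begin{equation*}
X^T_{\cone(f)} = \left(\sum_{\mathbf e} \chi(\Gr_{\mathbf e}(F_T\cone(f))) \prod_{i=1}^n x_i^{\<S_i,\mathbf e\>_a - \<S_i, F_T\cone(f)\>}\right) \prod_{i=1}^n x_i^{a_i(f)},
\end{equation*}
where $a_i(f)$ is the multiplicity of $T_i[1]$ in $\cone(f)$. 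The identities $\ind_{\mathcal T}(\cone(f)) = [T_0]-[T_1]$ and $\ind_{\mathcal T}(\cone(f)) = \ind_{\mathcal T}(M_f) - [T'_f]$ recover $a_i(f)$ from $\ind_{\mathcal T}(M_f)$, which itself only depends on $F_T M_f = F_T\cone(f)$ via its minimal projective presentation. Standard constructibility results on $\rep(B_T,d_3^*)$---constructibility of $M\mapsto \chi(\Gr_{\mathbf e}(M))$ in the spirit of \cite{Palu:multiplication} together with upper semi-continuity of $\dim\Hom$ and $\dim\Ext^1$---then imply that each ingredient above is a constructible function of $F_T\cone(f)$ with finitely many values. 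Pulling back via $\psi$ and intersecting the finitely many dense opens of $U$ on which each such ingredient is constant yields the desired Zariski dense open $U_{(T_1,T_0)}\subset V$.

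The main technical obstacle is the bookkeeping of the $T[1]$-summands of $\cone(f)$: the map $\phi_{T_1,T_0}$ only sees the 4-term sequence in $B_T\modg$ and is \emph{a priori} blind to direct summands lying in $T[1]$. The resolution is that the index supplies the missing information---knowing $\ind_{\mathcal T}(M_f)$, which is determined by the $B_T$-module $F_T\cone(f)$, determines the multiplicities $a_i(f)$---so the whole character $X^T_{\cone(f)}$ becomes a constructible function of a point in a classical module variety, and the existence and uniqueness claims follow formally from irreducibility of $V$.
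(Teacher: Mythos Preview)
Your argument is correct and follows the same skeleton as the paper's: exploit the irreducibility of the affine space $\Hom_{\CC}(T_1,T_0)$ and the constructibility of the ingredients entering $X^T_{\cone(f)}$, then intersect finitely many dense opens. The paper is more direct in that it invokes \cite[Corollaries~6 and~8]{Palu:multiplication} to obtain constructibility of $f\mapsto \chi(\Gr_{\mathbf e}(F_T\cone(f)))$ and of $f\mapsto (\<S_i,F_T\cone(f)\>)_i$ straight on $\Hom_{\CC}(T_1,T_0)$, without your detour through the module variety $\rep(B_T,d_3^*)$ via $\phi_{T_1,T_0}$.

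Your explicit treatment of the $\mathcal T[1]$-summands of $\cone(f)$, however, is not a detour but a genuine addition. The two constructible functions the paper controls depend only on $F_T\cone(f)$, which annihilates every direct summand of $\cone(f)$ lying in $\mathcal T[1]$; yet such a summand $T'_f[1]$ contributes the monomial $\prod_i x_i^{a_i(f)}$ to $X^T_{\cone(f)}$, and this factor is \emph{not} determined by the data $(\chi(\Gr_{\mathbf e}(F_T\cone(f))))_{\mathbf e}$ and $(\<S_i,F_T\cone(f)\>)_i$ alone. Your recovery of the multiplicities $a_i(f)$ from the fixed index $[T_0]-[T_1]$ together with $\ind_{\mathcal T}(M_f)$---the latter read off the minimal projective presentation of $F_T\cone(f)$, hence constructible---is precisely what closes this gap. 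An equivalent one-line patch to the paper's argument would be to further intersect with the dense open set on which $f\mapsto (\<F_T\cone(f),S_i\>)_i$ is constant, since $[\ind_{\mathcal T}(M_f)]_i = \<F_T\cone(f),S_i\>$ by \cite[Lemma~2.3]{Palu}.
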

		\begin{proof}
			Let $T_0,T_1 \in \mathcal T$ and let $f \in \Hom_{\CC}(T_1,T_0)$. With the assumption on the constructibility of the cones, it follows from \cite[Corollary 6]{Palu:multiplication} that for any $\mathbf e \in K_0(B_T\modg)$ the map $\lambda_{\mathbf e} : f \mapsto \chi(\Gr_{\mathbf e}(F_T \cone(f)))$ is constructible. Thus, there exists a non-empty open subset $U_{\mathbf e}$ in $\Hom_{\CC}(T_1,T_0)$ such that $\lambda_{\mathbf e}$ is constant on $U_{\mathbf e}$. Since $\Hom_{\CC}(T_1,T_0)$ is a finite dimensional vector space, it is an irreducible affine variety and thus $U_{\mathbf e}$ is dense in $\Hom_{\CC}(T_1,T_0)$. Set $U_{\Gr} = \bigcap_{\mathbf e} U_{\mathbf e}$ where $\mathbf e$ runs over the (finite) set of elements in $K_0(B_T\modg)$ such that $\Gr_{\mathbf e}(F_T\cone(f))$ is non-empty. Then $U_{\Gr}$ is open and dense in $\Hom_{\CC}(T_1,T_0)$. 

			Also, since $\mathcal C$ has constructible cones with respect to $\mathcal T$-morphisms, it follows from \cite[Corollary 8]{Palu:multiplication} that $\iota:f \mapsto (\<S_i,F_T \cone(f)\>)_{i=1, \ldots n}$ is constructible. Thus, there exists a dense open subset $U_\iota \subset \Hom_{\CC}(T_0,T_1)$ such that $\iota$ is constant on $U_\iota$. 

			We finally set $U_{(T_1,T_0)} = U_\iota \cap U_{\Gr}$. It is a dense open subset in $\Hom_{\CC}(T_1,T_0)$ and the map $X: f \mapsto X(f)$ is constant over $U_{(T_1,T_0)}$, which proves the existence of a generic value. Unicity follows from the irreducibility of $\Hom_{\CC}(T_1,T_0)$.
		\end{proof}

	\subsection{Stabilisation maps and cluster characters}
		Let $T_0,T_1,T_0',T_1'$ be objects in $\mathcal T$. Then $[T_0]-[T_1]=[T_0']-[T_1']$ in $K_0(\mathcal T)$ if and only if there exists $T,T' \in \mathcal T$ such that $T_0 \oplus T= T_0' \oplus T'$ and $T_1 \oplus T= T_1' \oplus T'$. We define on $\mathcal T$ a structure of right-filter by setting $T' \leq T''$ if and only if there exists $T^{(3)} \in \mathcal T$ such that $T''=T' \oplus T^{(3)}$. 

		\begin{defi}
			Let $T_0,T_1$ be in $\mathcal T$ and $T' \leq T''$ be in $\mathcal T$ as above. The \emph{stabilisation map} from $\Hom_{\CC}(T_1 \oplus T',T_0 \oplus T')$ to $\Hom_{\CC}(T_1 \oplus T'',T_0 \oplus T'')$ is 
			$$\St_{T',T''}:\left\{\begin{array}{rcl}
				\Hom_{\CC}(T_1 \oplus T',T_0 \oplus T') & \fl & \Hom_{\CC}(T_1 \oplus T'',T_0 \oplus T'')\\
				f & \mapsto & f \oplus \id_{T^{(3)}}.
			\end{array}\right.$$
			We set $\St_{T'}=\St_{0,T'}$ the stabilisation map $\Hom_{\CC}(T_1,T_0) \fl \Hom_{\CC}(T_1 \oplus T', T_0 \oplus T')$ sending $f$ to $f \oplus \id_{T'}$.
		\end{defi}
		
		Given an element $\gamma \in K_0(\mathcal T)$, there exists a unique pair $(T_0^{\min}(\gamma), T_1^{\min}(\gamma))$ of objects in $\mathcal T$ such that 
		$$\gamma=[T_0^{\min}(\gamma)]-[T_1^{\min}(\gamma)]$$ and such that $T_0^{\min}(\gamma)$ and $T_1^{\min}(\gamma)$ have no common direct factor. 

		\begin{rmq}
			Let $T',T_0,T_1$ be objects in $\mathcal T$ and $f \in \Hom_{\CC}(T_1,T_0)$. The triangles 
			$$
				T_1 \xrightarrow{f} T_0 \fl \cone(f) \fl T_1[1] 
				\textrm{ and }
				T' \xrightarrow{\id_{T'}} T' \fl 0 \fl T'[1]
			$$
			give rise to
			$$T_1 \oplus T' \xrightarrow{f \oplus \id_{T'} } T_0\oplus T' \fl \cone(f) \fl T_1[1] \oplus T'[1]$$
			so that $\cone(f \oplus \id_{T'}) \simeq \cone(f)$ and thus $X(f \oplus \id_{T'})=X(f)$. Thus, $X$ is invariant under stabilisation. 

			Note that if $f$ is generic in $\Hom_{\CC}(T_1,T_0)$, the image of its $\Aut(T_0)^{\op} \times \Aut(T_1)$-orbit under a stabilisation map is a Zariski dense open subset in the image of $\Hom_{\CC}(T_1,T_0)$ under this stabilisation map. 
		\end{rmq}

		\begin{theorem}[Stability Theorem]\label{theorem:stability}
			Let $T_0, T_1$ be objects in $\mathcal T$ and $\gamma=[T_0]-[T_1] \in K_0(\mathcal T) $. Then a generic morphism in $\Hom_{\CC}(T_1,T_0)$ is isomorphic to an element in the image of the stabilisation map $\Hom_{\CC}(T_1^{\min}(\gamma),T_0^{\min}(\gamma)) \fl \Hom_{\CC}(T_1,T_0)$.
		\end{theorem}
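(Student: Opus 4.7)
The plan is to carry out a Gauss-elimination-type reduction on the block matrix of an arbitrary $f \in \Hom_{\CC}(T_1,T_0)$, where the pivot is the component mapping the common direct summand of $T_0$ and $T_1$ to itself. By Krull-Schmidt in $\mathcal T$ together with the absence of common summands in $T_0^{\min}(\gamma)$ and $T_1^{\min}(\gamma)$, there exists a (unique up to isomorphism) object $T^{(3)} \in \mathcal T$ such that $T_0 \simeq T_0^{\min}(\gamma) \oplus T^{(3)}$ and $T_1 \simeq T_1^{\min}(\gamma) \oplus T^{(3)}$. Set $A_0 = T_0^{\min}(\gamma)$, $A_1 = T_1^{\min}(\gamma)$ and $C = T^{(3)}$, so that each $f \in \Hom_{\CC}(T_1,T_0)$ is described by a block matrix
$$f = \begin{pmatrix} f_{AA} & f_{AC} \\ f_{CA} & f_{CC} \end{pmatrix}$$
with entries in $\Hom_{\CC}(A_1,A_0)$, $\Hom_{\CC}(C,A_0)$, $\Hom_{\CC}(A_1,C)$, $\End_{\CC}(C)$ respectively. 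The image of $\St_{T^{(3)}}$ is exactly the set of morphisms of shape $f' \oplus \id_C$, so it suffices to show that a generic $f$ is conjugate to such a morphism under the $\Aut_{\CC}(T_0)^{\op} \times \Aut_{\CC}(T_1)$-action $(g_0,g_1).f = g_0 f g_1$.

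The first step is to identify the generic subset. Since $\End_{\CC}(C)$ is a finite-dimensional $\k$-algebra, its group of units $\Aut_{\CC}(C)$ is the complement of a proper algebraic subvariety (cut out by the vanishing of the determinant of the left regular representation, a polynomial which is non-zero as it does not vanish on $\id_C$), hence a Zariski dense open subset of $\End_{\CC}(C)$. Since the projection $f \mapsto f_{CC}$ is a surjective $\k$-linear map between affine spaces, the preimage
$$U = \ens{f \in \Hom_{\CC}(T_1,T_0) \mid f_{CC} \in \Aut_{\CC}(C)}$$
is Zariski dense open in $\Hom_{\CC}(T_1,T_0)$; this is our generic set.

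The second step is an explicit block reduction. For $f \in U$, set $g = f_{CC}^{-1}$ and consider the block morphisms
$$\alpha = \begin{pmatrix} \id_{A_0} & -f_{AC}\, g \\ 0 & g \end{pmatrix}, \qquad \beta = \begin{pmatrix} \id_{A_1} & 0 \\ -g\, f_{CA} & \id_C \end{pmatrix},$$
which lie in $\Aut_{\CC}(T_0)$ and $\Aut_{\CC}(T_1)$ respectively, their inverses being furnished by standard back-substitution for block-triangular matrices with invertible diagonal blocks. A direct block-matrix computation then yields
$$\alpha \circ f \circ \beta = \bigl(f_{AA} - f_{AC}\, g\, f_{CA}\bigr) \oplus \id_C,$$
which lies in the image of $\St_{T^{(3)}}$. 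Hence $f$ is in the $\Aut_{\CC}(T_0)^{\op} \times \Aut_{\CC}(T_1)$-orbit of an element of $\im\St_{T^{(3)}}$, as required.

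The main obstacle is essentially cosmetic: one has to verify that block-matrix calculus is legitimate in the additive $\k$-linear category $\mathcal T$, namely that composition corresponds to matrix multiplication and that block-triangular morphisms with invertible diagonal blocks are invertible via the expected formulas. Once this is granted the argument is pure linear algebra over $\End_{\CC}(C)$, and no genuine categorical subtlety intervenes.
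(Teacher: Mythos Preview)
Your proof is correct and follows essentially the same Schur-complement/block-elimination argument as the paper's own proof (which it attributes to \cite[Theorem 5.2.2]{IOTW}). Your version is in fact slightly more careful in explicitly identifying the dense open set $U=\ens{f : f_{CC}\in\Aut_{\CC}(C)}$, whereas the paper simply asserts that for a generic $\phi$ the corner block $r$ is invertible; otherwise the two arguments coincide.
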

		\begin{proof}
			The proof is the same as \cite[Theorem 5.2.2]{IOTW}. We recall it for completeness. Let $T_0, T_1$ be objects in $\mathcal T$ and $\gamma=[T_0]-[T_1] \in K_0(\mathcal T) $. Then, there exists some $T \in \mathcal T$ such that $T_0=T_0^{\min}(\gamma) \oplus T$ and $T_1=T_1^{\min}(\gamma) \oplus T$. Let $\phi \in \Hom_{\CC}(T_1,T_0)$ be a generic element. We now prove that there exists $(g_0,g_1) \in \Aut(T_0 \oplus T)^{\op} \times \Aut(T_1 \oplus T)$ such that $\phi=(g_0,g_1).\St_T(\phi^{\min})$ where $\phi^{\min} \in \Hom_{\CC}(T_1^{\min}(\gamma),T_0^{\min}(\gamma))$. 

			The element $\phi \in \Hom_{\CC}(T_1,T_0)$ can be viewed as a $2 \times 2$ matrix 
			$$\phi=\left[\begin{array}{cc}
				f & h \\
				g & r 
			\end{array}\right]$$
			with $f \in \Hom_{\CC}(T_1^{\min}(\gamma),T_0^{\min}(\gamma))$, $g \in \Hom_{\CC}(T_1^{\min}(\gamma),T)$, $h \in \Hom_{\CC}(T,T_0^{\min}(\gamma))$ and $r \in \End_{\CC}(T)$. Since $\phi$ is generic, it has maximal rank and in particular $r$ has full rank so that it is invertible. Thus, we get
			$$\phi=\left[\begin{array}{cc}
				f & h \\
				g & r 
			\end{array}\right]
			=
			\left[\begin{array}{cc}
				1_{T_0} & hr^{-1} \\
				0 & 1_{T} 
			\end{array}\right]
			\left[\begin{array}{cc}
				f-hr^{-1}g & 0 \\
				0 & 1_{T} 
			\end{array}\right]
			\left[\begin{array}{cc}
				1_{T_1} & 0\\
				g & r 
			\end{array}\right]
			$$
			so that 
			$$\phi=(g_0,g_1)\left[\begin{array}{cc}
				f-hr^{-1}g & 0 \\
				0 & 1_{T} 
			\end{array}\right]
			=
			(g_0,g_1)\St_T(\phi^{\min})$$
			where $$g_0=\left[\begin{array}{cc}
				1_{T_0} & hr^{-1}\\
				0 & 1_{T} 
			\end{array}\right] \in \Aut_{\CC}(T_0 \oplus T)^{\op},$$
			$$g_1=\left[\begin{array}{cc}
				1_{T_1} & 0\\
				g & r
			\end{array}\right] \in \Aut_{\CC}(T_1 \oplus T)$$
			and $\phi^{\min}=f-hr^{-1}g \in \Hom_{\CC}(T_1^{\min}(\gamma),T_0^{\min}(\gamma))$. This proves the theorem.
		\end{proof}

		\begin{corol}\label{corol:K0param}
			For any $T_0,T_1 \in \mathcal T$, the Laurent polynomial $X(T_1,T_0)$ only depends on $[T_0]-[T_1] \in K_0(\mathcal T)$.
		\end{corol}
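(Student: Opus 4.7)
The plan is to deduce the corollary directly from Theorem~\ref{theorem:stability} together with the two invariance properties of $X$ stated just before it, namely invariance under the $\Aut_{\CC}(T_0)^{\op} \times \Aut_{\CC}(T_1)$-action and under stabilisation maps.

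Fix $\gamma \in K_0(\mathcal T)$ and let $T_0, T_1, T_0', T_1' \in \mathcal T$ with $[T_0]-[T_1]=[T_0']-[T_1']=\gamma$. By the definition of $T_0^{\min}(\gamma)$ and $T_1^{\min}(\gamma)$, there exist $T, T' \in \mathcal T$ such that $T_0 = T_0^{\min}(\gamma) \oplus T$, $T_1 = T_1^{\min}(\gamma) \oplus T$ and $T_0' = T_0^{\min}(\gamma) \oplus T'$, $T_1' = T_1^{\min}(\gamma) \oplus T'$. Let $\phi^{\min} \in \Hom_{\CC}(T_1^{\min}(\gamma), T_0^{\min}(\gamma))$ be a generic element.

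By Theorem~\ref{theorem:stability}, a generic $\phi \in \Hom_{\CC}(T_1,T_0)$ is of the form $\phi = (g_0,g_1).\St_T(\phi^{\min})$ for some $(g_0,g_1) \in \Aut_{\CC}(T_0)^{\op} \times \Aut_{\CC}(T_1)$; moreover we may arrange that $\phi^{\min}$ itself is generic in $\Hom_{\CC}(T_1^{\min}(\gamma), T_0^{\min}(\gamma))$ (the locus where the relevant rank maximality and generic cone conditions hold is a dense open subset, and its image under the stabilisation map together with the automorphism action still meets any chosen dense open subset of $\Hom_{\CC}(T_1,T_0)$). Since $X$ is invariant under the $\Aut_{\CC}(T_0)^{\op} \times \Aut_{\CC}(T_1)$-action and satisfies $X(f \oplus \id_{T'}) = X(f)$ by the remark preceding Theorem~\ref{theorem:stability}, we obtain
\[
X(T_1,T_0) = X(\phi) = X(\St_T(\phi^{\min})) = X(\phi^{\min}) = X(T_1^{\min}(\gamma), T_0^{\min}(\gamma)).
\]
The same argument applied to $(T_1',T_0')$ gives $X(T_1',T_0') = X(T_1^{\min}(\gamma), T_0^{\min}(\gamma))$, so $X(T_1,T_0) = X(T_1',T_0')$, and this common value depends only on $\gamma$.

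The only mildly delicate point is the compatibility between the generic locus in $\Hom_{\CC}(T_1,T_0)$ provided by Lemma~\ref{lem:Xgen} and the one in $\Hom_{\CC}(T_1^{\min}(\gamma), T_0^{\min}(\gamma))$: one has to check that choosing $\phi^{\min}$ generic forces the element $\St_T(\phi^{\min})$ (up to the automorphism action) to lie in $U_{(T_1,T_0)}$. This follows from the second part of the remark preceding Theorem~\ref{theorem:stability}, which says that the image of the $\Aut$-orbit of a generic morphism under a stabilisation map is Zariski dense open, so it necessarily intersects the dense open set $U_{(T_1,T_0)}$ on which $X$ takes the value $X(T_1,T_0)$.
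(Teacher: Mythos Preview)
Your proof is correct and follows essentially the same route as the paper's own proof: apply the Stability Theorem to reduce a generic morphism in $\Hom_{\CC}(T_1,T_0)$ to (the automorphism-orbit of) the stabilisation of some $\phi^{\min}$ in the minimal Hom-space, then use invariance of $X$ under both the automorphism action and stabilisation to conclude $X(T_1,T_0)=X(T_1^{\min}(\gamma),T_0^{\min}(\gamma))$. The paper's version is terser and simply asserts that $\phi^{\min}$ may be taken generic; you are more careful in justifying this compatibility via the density remark preceding the Stability Theorem, which is a welcome clarification rather than a different argument.
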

		\begin{proof}
			Let $T_0,T_1$ be objects in $\mathcal T$ and set $\gamma =[T_0]-[T_1]$. It follows from the stability theorem that $X(T_1,T_0)=X(\St_T(\phi^{\min}))$ where $\phi^{\min}$ is a generic element in $\Hom_{\CC}(T_1^{\min}(\gamma),T_0^{\min}(\gamma))$. In particular, $X(T_1,T_0)=X(T_1^{\min}(\gamma),T_0^{\min}(\gamma))$ only depends on $\gamma$. 
		\end{proof}

		\begin{defi}
			For any $\gamma \in K_0(\mathcal T)$, a \emph{generic morphism of index $\gamma$} is a morphism in the dense open subset $U_{(T_1^{\min}(\gamma),T_0^{\min}(\gamma))} \subset \Hom_{\CC}(T_1^{\min}(\gamma),T_0^{\min}(\gamma))$.
		\end{defi}

		If $T_0, T_1$ are objects in $\mathcal T$ such that $\gamma=[T_0]-[T_1] \in K_0(\mathcal T)$, we will sometimes abuse terminology and view a generic morphism of index $\gamma$ as an element of $\Hom_{\CC}(T_1,T_0)$ by considering its image under the stabilisation map.

		We now define the generic characters as the images under $X$ of the generic morphisms.
		\begin{defi}
			For any $\gamma \in K_0(\mathcal T) $, the \emph{generic character of index $\gamma$ is}
			$$X(\gamma)=X(T_0^{\min}(\gamma),T_0^{\min}(\gamma))$$
			defined in Lemma \ref{lem:Xgen}.
			The set 
			$$\mathcal G^{T}(\CC)=\ens{X(\gamma) | \gamma \in K_0(\mathcal T) }$$
			is called the set of \emph{generic characters} associated to $T$ in $\CC$.
		\end{defi}

	\subsection{Generic morphism and general presentations in $B_{T}\modg$}\label{section:generalpresentations}
		The classical generic variables introduced in \cite{Dupont:BaseAaffine} for an acyclic quiver $Q$ were closely related to the generic representation theory of the finite dimensional hereditary $\k$-algebra $\k Q$ developed in \cite{Kac:infroot1,Kac:infroot2,Schofield:generalrepresentations}. In general, a 2-Calabi-Yau tilted algebra $B_{T}$ is a basic finite dimensional $\k$-algebra but is not hereditary. In \cite{DF:generalpresentations}, the authors develop a generic representation theory for any finite dimensional basic $\k$-algebra. This is done by replacing the usual notion of generic representation by the notion of generic presentation. We recall briefly this notion. 

		Given a finite dimensional basic $\k$-algebra $B$, we denote by $\epsilon_1, \ldots, \epsilon_n$ a maximal set of primitive idempotents of $B$. The indecomposable projective $B$-modules are $_BP_i=B\epsilon_i$ for $i \in \ens{1, \ldots, n}$. We denote by $K_0(B\projg)$ the Grothendieck group of the additive category $B{\projg}=\add(_BP_1 \oplus \cdots \oplus {_BP_n})$ and for any $B$-module $M$, we denote by $[M]$ its class in $K_0(B\projg)$. 

		For any projective $B$-modules $M_1,M_0$ and any morphism $f \in \Hom_{B}(M_1,M_0)$, the \emph{$\delta$-vector} of $f$ is $[M_0]-[M_1]$. Note that, identifying $K_0(B\projg)$ with $\Z^n$ by sending each $[_BP_i]$ to the $i$-th vector $\alpha_i$ of the canonical basis of $\Z^n$, the above definition coincides with the one provided in \cite{DF:generalpresentations}. 

		The group $\Aut_B(M_0)^{\op} \times \Aut_B(M_1)$ acts on $\Hom_B(M_1,M_0)$ by $(g_0,g_1).f=g_0fg_1$. A morphism $f \in \Hom_B(M_1,M_0)$ is called \emph{generic} if its $\Aut_B(M_0)^{\op} \times \Aut_B(M_1)$-orbit is a Zariski dense open subset in $\Hom_B(M_1,M_0)$. Such a generic morphism is called a \emph{general presentation} in $B$-mod. 

		\begin{rmq}
			As it was observed in \cite[Example 6.4]{DF:generalpresentations}, a minimal presentation of a $B$-module $M$ may not be a general presentation. Also, not every $B$-module admits a projective presentation which is a general presentation. A counterexample will for instance be provided in Remark \ref{rmq:cexgeneralpres}. 
		\end{rmq}

		We now prove that, under the functor $F_{T}:\CC \fl B_{T}\modg$, generic $\mathcal T$-morphisms in the category $\CC$ correspond to general presentations in $B_{T}\modg$.
		\begin{lem}\label{lem:genericpresentation}
			For any $T_0,T_1$ in $\mathcal T$ and any $f \in \Hom_{\CC}(T_1,T_0)$, the following are equivalent~:
			\begin{enumerate}
				\item $f$ is a generic $\mathcal T$-morphism in $\Hom_{\CC}(T_1,T_0)$~;
				\item $F_{T}f$ is a general presentation in $\Hom_{B_{T}}(F_{T}T_1,F_{T}T_0).$
			\end{enumerate}
		\end{lem}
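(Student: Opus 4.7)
The plan is to exploit the fact that $F_T = \Hom_{\CC}(T, -)$ restricts to an equivalence of additive categories between $\mathcal T = \add T$ and $B_T\projg$, the additive subcategory of finitely generated projective left $B_T$-modules. This follows from the equivalence $\CC/(T[1]) \xrightarrow{\sim} B_T\modg$ recalled in Section \ref{ssection:Tmorphisms}, together with the observation that $F_T T_i$ is an indecomposable projective $B_T$-module whenever $T_i$ is an indecomposable direct summand of $T$.

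The first step is to unpack this equivalence concretely: for any $T_0, T_1 \in \mathcal T$, it yields a $\k$-linear isomorphism
$$F_T : \Hom_{\CC}(T_1, T_0) \xrightarrow{\sim} \Hom_{B_T}(F_T T_1, F_T T_0)$$
which is, in particular, an isomorphism of irreducible affine varieties, together with compatible group isomorphisms $\Aut_{\CC}(T_i) \xrightarrow{\sim} \Aut_{B_T}(F_T T_i)$ for $i \in \ens{0,1}$. The second step is to check, by functoriality of $F_T$, that this linear isomorphism is equivariant with respect to the action of $\Aut_{\CC}(T_0)^{\op} \times \Aut_{\CC}(T_1)$ on the source and the corresponding action of $\Aut_{B_T}(F_T T_0)^{\op} \times \Aut_{B_T}(F_T T_1)$ on the target, both given by composition; this is a routine verification from $F_T(g_0 f g_1) = F_T(g_0) F_T(f) F_T(g_1)$.

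Once this equivariance is established, $F_T$ induces a bijection between the $\Aut_{\CC}(T_0)^{\op} \times \Aut_{\CC}(T_1)$-orbits in $\Hom_{\CC}(T_1, T_0)$ and the $\Aut_{B_T}(F_T T_0)^{\op} \times \Aut_{B_T}(F_T T_1)$-orbits in $\Hom_{B_T}(F_T T_1, F_T T_0)$, and preserves the Zariski topology. Since both notions appearing in the lemma -- being a generic $\mathcal T$-morphism in $\Hom_{\CC}(T_1, T_0)$ and being a general presentation in $\Hom_{B_T}(F_T T_1, F_T T_0)$ -- are characterised by the corresponding orbit being a Zariski dense open subset, the equivalence $(1) \Leftrightarrow (2)$ follows immediately. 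No substantial technical obstacle arises here; the content of the lemma is really the geometric translation between $\Hom_{\CC}$ and $\Hom_{B_T}$ provided by the equivalence $F_T|_{\mathcal T}$.
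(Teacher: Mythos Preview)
Your proof is correct and follows essentially the same route as the paper: both arguments use that $F_T$ induces a $\k$-linear isomorphism $\Hom_{\CC}(T_1,T_0)\simeq\Hom_{B_T}(F_TT_1,F_TT_0)$ compatible with the actions of the automorphism groups, from which the equivalence of the two ``dense open orbit'' conditions is immediate. The only cosmetic difference is that the paper spells out why this Hom-isomorphism holds (no morphism $T_1\to T_0$ factors through $\mathcal T[1]$, since $\Hom_{\CC}(T_1,X)=0$ for $X\in\mathcal T[1]$), whereas you package this as the restriction of the equivalence $\CC/(T[1])\simeq B_T\modg$ to $\mathcal T\simeq B_T\projg$.
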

		\begin{proof}
			We recall that the functor $F_{T}=\Hom_{\CC}(T,-)$ induces a $\k$-linear equivalence of categories $F_{T} : \CC/\mathcal T[1] \xrightarrow{\sim} B_{T}\modg$. Let $T_0,T_1$ be objects in $\mathcal T$. Since $T$ is a cluster-tilting object, for any $X$ in $\mathcal T[1]$, $\Hom_{\CC}(T_1,X)=0$ so that there are no morphisms from $T_1$ to $T_0$ in $\CC$ factorising through $\mathcal T[1]$. In particular, $F_{T}$ induces an isomorphism of $\k$-vector spaces 
			$$\Hom_{\CC}(T_1,T_0) \simeq \Hom_{B_{T}}(F_{T}T_1,F_{T}T_0)$$
			and this isomorphism is compatible with the actions of both $\Aut_{\CC}(T_0)^{\op} \times \Aut_{\CC}(T_1)$ and $\Aut_B(F_T T_0)^{\op} \times \Aut_B(F_T T_1)$. Thus, $f$ is generic in $\Hom_{\CC}(T_1,T_0)$ if and only if $F_{T}f$ is generic $\Hom_{B_{T}}(F_{T}T_1,F_{T}T_0)$
		\end{proof}

		\begin{corol}\label{corol:presentation}
			A presentation of $B_{T}$-modules is generic if and only if it is the image under $F_{T}$ of a generic $\mathcal T$-morphism. 
		\end{corol}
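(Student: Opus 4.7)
The plan is to derive this corollary directly from Lemma \ref{lem:genericpresentation} together with the description of the projective $B_T$-modules recalled in Section \ref{ssection:Tmorphisms}. The forward implication is essentially already proved: if $f \in \Hom_{\CC}(T_1,T_0)$ is a generic $\mathcal T$-morphism, then $F_T f$ is a general presentation by part $(1)\Rightarrow(2)$ of the lemma, so nothing more is required in that direction.

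For the converse, I would start from an arbitrary general presentation $g \in \Hom_{B_T}(P_1,P_0)$ and first rewrite its source and target as images under $F_T$. Since projective $B_T$-modules are exactly the objects of the form $F_T M$ with $M$ running over $\mathcal T$, we can choose $T_0,T_1 \in \mathcal T$ such that $F_T T_0 \simeq P_0$ and $F_T T_1 \simeq P_1$. The functor $F_T$ induces the $\k$-linear isomorphism
\[
\Hom_{\CC}(T_1,T_0) \xrightarrow{\sim} \Hom_{B_T}(F_T T_1, F_T T_0)
\]
used inside the proof of Lemma \ref{lem:genericpresentation} (this uses that $\Hom_{\CC}(T_1,X)=0$ for any $X$ in $\mathcal T[1]$ because $T$ is cluster-tilting). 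Hence there exists $f \in \Hom_{\CC}(T_1,T_0)$ with $F_T f = g$, and applying part $(2)\Rightarrow(1)$ of the same lemma identifies $f$ as a generic $\mathcal T$-morphism, whose image under $F_T$ is the chosen general presentation $g$.

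There is no real obstacle here; the entire content sits in Lemma \ref{lem:genericpresentation}. The only point to be careful about is that the isomorphism $F_T : \Hom_{\CC}(T_1,T_0) \xrightarrow{\sim} \Hom_{B_T}(F_T T_1, F_T T_0)$ is equivariant for the actions of $\Aut_{\CC}(T_0)^{\op} \times \Aut_{\CC}(T_1)$ and $\Aut_{B_T}(F_T T_0)^{\op} \times \Aut_{B_T}(F_T T_1)$, so that dense open orbits correspond to dense open orbits, but this is already noted in the proof of the lemma and requires no additional argument.
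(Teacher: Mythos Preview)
Your proposal is correct and follows exactly the same approach as the paper: the paper's proof consists of a single sentence invoking Lemma~\ref{lem:genericpresentation} together with the fact that projective $B_T$-modules are precisely the $F_T M$ with $M$ in $\mathcal T$, and your argument simply spells out both implications of that sentence in more detail.
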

		\begin{proof}
			The result follows from Lemma \ref{lem:genericpresentation} and the fact that the projective $B_{T}$-modules are the $F_{T}M$ where $M$ runs over the non-zero objects in $\mathcal T$. 
		\end{proof}

		\begin{rmq}\label{rmq:deltavector}
			Note that for any $f \in \Hom_{\CC}(T_1,T_0)$, the $\delta$-vector of $F_{T}f$ is $[F_{T}T_0]-[F_{T}T_1] \in K_0(B\projg)$. Thus, using the above identifications of the Grothendieck groups $K_0(\mathcal T)$ and $K_0(B\projg)$ with $\Z^n$, we have~:
			$$\delta(F_{T}f)=[F_{T}T_0]-[F_{T}T_1]=[T_0]-[T_1]=\ind_{\mathcal T}(\cone(f)).$$ 
			Thus $F_{T}$ induces a 1-1 correspondence between generic $\mathcal T$-morphisms in $\CC$ and general presentations in $B_{T}$-mod and under this correspondence, a generic $\mathcal T$-morphism of index $\gamma$ corresponds to a general presentation of $\delta$-vector $\gamma$.
		\end{rmq}

\section{Cluster monomials and generic characters}\label{section:monomials}
	In this section, we assume that cluster-tilting subcategories of $\CC$ determine a cluster structure on $\CC$ (see Section \ref{ssection:clusterstructures}).

	We now prove that cluster monomials in $\mathcal A(Q_{T},\mathbf x)$ are generic cluster characters. We will see in Corollary \ref{corol:acyclic} that the converse is not true in general.
	\begin{theorem}\label{theorem:clustermonomials}
		Let $\CC$ be a triangulated 2-Calabi-Yau category with a cluster-tilting object $T$ and whose cones are constructible which respect to $\mathcal T$-morphisms. Assume that cluster-tilting subcategories determine a cluster structure on $\CC$. Then $$\mathcal M(Q_{T},\mathbf x) \subset \mathcal G^{T}(\CC).$$
	\end{theorem}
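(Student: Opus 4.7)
The plan is to invoke the Fu--Keller theorem recalled at the end of Section~\ref{ssection:characters}: every cluster monomial of $\mathcal A(Q_T,\mathbf x)$ can be written as $X^T_M$ for some basic rigid object $M$ of $\CC$ reachable from $T$. It therefore suffices, for each such $M$, to produce a $\gamma \in K_0(\mathcal T)$ with $X^T_M = X(\gamma)$. The natural candidate is the index $\gamma_M := \ind_{\mathcal T}(M) = [T^M_0]-[T^M_1]$.

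Concretely, I would first fix a triangle
$$T^M_1 \xrightarrow{f_M} T^M_0 \longrightarrow M \longrightarrow T^M_1[1]$$
with $T^M_0, T^M_1 \in \mathcal T$ chosen to have no common indecomposable direct summand. Any triangle of this form provided by Keller--Reiten can be put in this minimal shape by cancelling common summands, using the observation $\cone(f \oplus \id_{T'}) \simeq \cone(f)$ recorded before Theorem~\ref{theorem:stability}. By uniqueness of the pair $(T^{\min}_0(\gamma_M), T^{\min}_1(\gamma_M))$, this forces $T^M_i = T^{\min}_i(\gamma_M)$ for $i \in \{0,1\}$. Rigidity of $M$, together with \cite[\S 2.1]{DK:2CY} (already invoked in the paragraph preceding Lemma~\ref{lem:Xgen}), then guarantees that the $\Aut_{\CC}(T^M_0)^{\op} \times \Aut_{\CC}(T^M_1)$-orbit of $f_M$ is a Zariski dense open subset of $\Hom_{\CC}(T^M_1,T^M_0) = \Hom_{\CC}(T^{\min}_1(\gamma_M), T^{\min}_0(\gamma_M))$.

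With two Zariski dense open subsets at hand in the irreducible affine variety $\Hom_{\CC}(T^{\min}_1(\gamma_M), T^{\min}_0(\gamma_M))$, I would intersect the orbit of $f_M$ with the generic locus $U_{(T^{\min}_1(\gamma_M), T^{\min}_0(\gamma_M))}$ produced by Lemma~\ref{lem:Xgen}. On the orbit, $X$ takes the constant value $X^T_{\cone(f_M)} = X^T_M$ by its $\Aut \times \Aut$-invariance; on the generic locus it takes the constant value $X(\gamma_M)$ by definition. The non-empty intersection then yields $X^T_M = X(\gamma_M) \in \mathcal G^T(\CC)$, as required.

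The main obstacle is really the density statement for the orbit of $f_M$ in the no-common-summand model, which is precisely where rigidity of $M$ is indispensable; this is also what prevents the converse inclusion from being automatic. The cluster-structure hypothesis enters only via Fu--Keller, and constructibility of cones is used only to ensure that Lemma~\ref{lem:Xgen}, and hence $X(\gamma_M)$, is well defined.
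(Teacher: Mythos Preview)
Your argument is correct and follows essentially the same route as the paper: both produce, for a rigid reachable $M$, a triangle $T_1^M \xrightarrow{f_M} T_0^M \to M \to T_1^M[1]$ with no common summands, invoke \cite[\S 2.1]{DK:2CY} for density of the orbit of $f_M$, and conclude $X^T_M = X(\ind_{\mathcal T}(M))$ before applying Fu--Keller. The only cosmetic differences are that the paper obtains the no-common-summand property by taking $T_0^M \to M$ to be a minimal right $\mathcal T$-approximation and citing \cite[Proposition 2.2]{DK:2CY} (your cancellation-of-summands sketch is a bit loose, since an arbitrary $f$ need not split as $f' \oplus \id_{T'}$), and that ``basic'' should be dropped from your description of $M$, since cluster monomials with repeated factors correspond to non-basic rigid objects.
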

	\begin{proof}
		Let $M$ be a rigid object in $\CC$ and $T_1^M \xrightarrow{f_M} T_0^M \fl M \fl T_1^M[1]$ be a triangle in $\CC$ with $T_0^M,T_1^M \in \add T$ such that $T_0^M \fl M$ is a minimal right $\add T$-approximation. Set $\gamma=\ind_{\mathcal T}(M)=[T_0^M]-[T_1^M]$. Since $M$ is rigid in $\CC$, the objects $T_0^M$ and $T_1^M$ have no common direct summands \cite[Proposition 2.2]{DK:2CY} so that $T_0^M=T_0^{\min}(\gamma)$ and $T_1^M=T_1^{\min}(\gamma)$. Moreover, the $\Aut_{\CC}(T_0^M)^{\op} \times \Aut_{\CC}(T_1^M)$-orbit of $f_M$ is open and dense in $\Hom_{\CC}(T_1^M,T_0^M)$ \cite[\S 2.1]{DK:2CY}. It follows that $X(f_M)$ is the generic value of $X$ on $\Hom_{\CC}(T_1^M,T_0^M)$. Since $\cone(f_M) \simeq M$, it follows from the definition of $X$ that $X(f_M)=X^{T}_M$ and thus $X^{T}_M=X(\ind_{\mathcal T}(M)) \in \mathcal G^{T}(\CC)$. 	

		As cluster-tilting subcategories form a cluster structure on $\CC$, it follows from \cite{FK} that the cluster character $X^{T}_?$ induces a surjection from the set of reachable rigid objects to cluster monomials in $\mathcal A(Q_{T},\mathbf x)$. It thus follows from the above discussion that $\mathcal M(Q_{T},\mathbf x) \subset \mathcal G^{T}(\CC)$.
	\end{proof}

	We have actually proved the more precise statement~:
	\begin{prop}
		Let $\CC$ be a triangulated 2-Calabi-Yau category with a cluster-tilting object $T$ and whose cones are constructible which respect to $\mathcal T$-morphisms. Assume that cluster-tilting subcategories determine a cluster structure on $\CC$. Then the following hold~:
		\begin{enumerate}
			\item If $M$ is a rigid object in $\CC$, then $X^{T}_M=X(\ind_{\mathcal T}(M))$~;
			\item If $M$ is a rigid object in $\CC$ which is reachable from $T$, then $X(\ind_{\mathcal T}(M))$ is a cluster monomial in $\mathcal A(Q_{T},\mathbf x)$~;
			\item If $M$ is an indecomposable rigid object in $\CC$ which is reachable from $T$, then $X(\ind_{\mathcal T}(M))$ is a cluster variable in $\mathcal A(Q_{T},\mathbf x)$.
		\end{enumerate}
	\end{prop}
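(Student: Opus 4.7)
The plan is essentially to extract what has already been established in the proof of Theorem~\ref{theorem:clustermonomials}, organizing it into the three statements. The bulk of the work sits in part~(1); parts~(2) and~(3) then follow by invoking \cite{FK}.

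For part~(1), I would start from a rigid object $M \in \CC$ and choose a triangle $T_1^M \xrightarrow{f_M} T_0^M \fl M \fl T_1^M[1]$ with $T_0^M \fl M$ a minimal right $\add T$-approximation (so that $T_0^M, T_1^M \in \mathcal T$). The key input is \cite[Proposition 2.2]{DK:2CY}: rigidity of $M$ forces $T_0^M$ and $T_1^M$ to share no indecomposable direct summand. Setting $\gamma = \ind_{\mathcal T}(M) = [T_0^M] - [T_1^M] \in K_0(\mathcal T)$, this gives $T_0^M = T_0^{\min}(\gamma)$ and $T_1^M = T_1^{\min}(\gamma)$ by the uniqueness clause in the definition of the minimal pair. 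Next, by \cite[\S 2.1]{DK:2CY}, the $\Aut_{\CC}(T_0^M)^{\op}\times \Aut_{\CC}(T_1^M)$-orbit of $f_M$ is open and dense in $\Hom_{\CC}(T_1^M,T_0^M)$. Combined with Lemma~\ref{lem:Xgen}, $f_M$ therefore lies in the dense open subset $U_{(T_1^{\min}(\gamma),T_0^{\min}(\gamma))}$, so it is a generic morphism of index $\gamma$. Unwinding the definition, $X(\gamma) = X(f_M) = X^{T}_{\cone(f_M)} = X^{T}_M$, which is the statement of~(1).

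For parts~(2) and~(3), I would invoke the main theorem of \cite{FK} recalled at the end of Section~\ref{ssection:characters}: under the cluster structure hypothesis, the cluster character $X^{T}_?$ sends isoclasses of rigid (resp.\ indecomposable rigid) objects reachable from $T$ to cluster monomials (resp.\ cluster variables) of $\mathcal A(Q_{T},\mathbf x)$. Substituting the identity $X^{T}_M = X(\ind_{\mathcal T}(M))$ from~(1) then gives (2) for reachable rigid $M$ and (3) for indecomposable reachable rigid $M$.

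There is no real obstacle: the proposition is a clean repackaging of the computation inside Theorem~\ref{theorem:clustermonomials}, now stated at the level of individual objects rather than at the level of sets. The only point that has to be checked carefully is the identification $T_i^M = T_i^{\min}(\gamma)$; this uses specifically that $T_0^M \fl M$ is \emph{minimal} together with the rigidity-based cancellation statement from \cite[Proposition 2.2]{DK:2CY}, without which one would only get that $f_M$ lies in the image of a stabilisation map rather than in $\Hom_{\CC}(T_1^{\min}(\gamma),T_0^{\min}(\gamma))$ itself, although by Corollary~\ref{corol:K0param} this distinction is immaterial for the resulting Laurent polynomial.
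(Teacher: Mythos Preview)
Your proposal is correct and follows exactly the route taken in the paper: the proposition is explicitly stated there as an immediate consequence of the proof of Theorem~\ref{theorem:clustermonomials}, and your argument reproduces that proof step by step (minimal $\add T$-approximation, \cite[Proposition~2.2]{DK:2CY} for disjoint summands, \cite[\S 2.1]{DK:2CY} for the dense orbit, then \cite{FK} for parts~(2)--(3)). The only cosmetic point is that one should say the dense orbit of $f_M$ \emph{meets} $U_{(T_1^{\min}(\gamma),T_0^{\min}(\gamma))}$ rather than that $f_M$ lies in it, but since $X$ is constant on orbits this does not affect the conclusion.
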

	\begin{proof}
		This follows immediately from the proof of Theorem \ref{theorem:clustermonomials}.
	\end{proof}

	We now prove that in finite type, the set of generic characters coincides with the set of cluster monomials~:
	\begin{theorem}\label{theorem:finitetype}
		Let $\CC$ be a triangulated 2-Calabi-Yau category with a cluster-tilting object $T$ and whose cones are constructible which respect to $\mathcal T$-morphisms. Assume that cluster-tilting subcategories determine a cluster structure on $\CC$. Assume moreover that $\CC$ has finitely many indecomposable objects up to isomorphisms. Then 
		$$\mathcal G^{T}(\CC) = \mathcal M(Q_{T},\mathbf x).$$
	\end{theorem}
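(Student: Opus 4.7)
The inclusion $\mathcal M(Q_T,\mathbf x)\subseteq \mathcal G^T(\CC)$ is given by Theorem \ref{theorem:clustermonomials}, so only the reverse inclusion needs attention. Fix $\gamma \in K_0(\mathcal T)$; by the proposition following Theorem \ref{theorem:clustermonomials}, it suffices to produce a rigid object $M_\gamma$ reachable from $T$ and satisfying $\ind_{\mathcal T}(M_\gamma)=\gamma$, for then $X(\gamma)=X^T_{M_\gamma}\in \mathcal M(Q_T,\mathbf x)$.

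To construct such an $M_\gamma$, I would rely on the index combinatorics developed in \cite{DK:2CY}. For any cluster-tilting object $T'=T'_1\oplus \cdots \oplus T'_n$ of $\CC$, the vectors $\ind_{\mathcal T}(T'_1),\ldots,\ind_{\mathcal T}(T'_n)$ form a $\Z$-basis of $K_0(\mathcal T)$ and hence generate a simplicial cone $C_{T'}$ in $K_0(\mathcal T)\otimes_{\Z}\R$. As $T'$ ranges over cluster-tilting objects reachable from $T$, the cones $C_{T'}$ form the categorical counterpart of the $\mathbf g$-vector fan of $\mathcal A(Q_T,\mathbf x)$. Since $\CC$ has finitely many indecomposable objects, there are only finitely many reachable indecomposable rigid objects, hence only finitely many cluster variables, and $\mathcal A(Q_T,\mathbf x)$ is a cluster algebra of finite type; by the Fomin--Zelevinsky classification, the $\mathbf g$-vector fan is then complete, that is, $\bigcup_{T'} C_{T'}=K_0(\mathcal T)\otimes\R$.

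Completeness yields a reachable cluster-tilting object $T'$ with $\gamma\in C_{T'}$, so $\gamma=\sum_i a_i\,\ind_{\mathcal T}(T'_i)$ for suitable $a_i\in\Z_{\geq 0}$. The object $M_\gamma=\bigoplus_i(T'_i)^{a_i}$ is then rigid (since $\Ext^1_{\CC}(T'_i,T'_j)=0$ for all $i,j$ by rigidity of $T'$), reachable from $T$ (it lies in the cluster-tilting subcategory $\add T'$), and has $\ind_{\mathcal T}(M_\gamma)=\sum_i a_i\,\ind_{\mathcal T}(T'_i)=\gamma$ by additivity of the index, which completes the argument. The main obstacle is the appeal to completeness of the $\mathbf g$-vector fan in the categorical framework; this is standard via the identification $\mathbf g\text{-vector}\leftrightarrow\text{index}$ already used in Theorem \ref{theorem:clustermonomials}, but a fully self-contained proof would require a combinatorial analysis of the mutation graph of cluster-tilting objects under the finiteness hypothesis, which is finite and $n$-regular under the standing cluster-structure assumption.
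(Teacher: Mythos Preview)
Your argument is correct and reaches the same key claim as the paper --- that every $\gamma\in K_0(\mathcal T)$ is the index of some rigid reachable object --- but you and the paper invoke different heavy machinery to justify it. The paper first observes, as you do, that $\mathcal A(Q_T,\mathbf x)$ has finitely many cluster variables and hence that $Q_T$ is mutation-equivalent to a Dynkin quiver $Q$; it then applies the Keller--Reiten recognition theorem \cite{KR:acyclic} to conclude that $\CC$ is triangle-equivalent to the cluster category $\CC_Q$, where the surjectivity of the index onto $K_0(\mathcal T)$ is a known fact. You instead stay on the cluster-algebraic side and appeal to the completeness of the $\mathbf g$-vector fan in finite type, transported to the index fan via \cite{DK:2CY}. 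Your route avoids the categorical recognition theorem, at the price of needing the precise index/$\mathbf g$-vector dictionary (which in this paper is set up only later, in Section \ref{section:index}, and with $\mathbf g_M=-\coind_{\mathcal T}(M)$ rather than $\ind_{\mathcal T}(M)$, so your parenthetical pointer to Theorem \ref{theorem:clustermonomials} is not quite the right reference). Either approach is legitimate; the paper's is shorter on the page because the Keller--Reiten reduction packages all of the finite-type combinatorics into one citation.
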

	\begin{proof}
		The cluster character $X^T_?$ induces a surjection from the set of reachable indecomposable rigid objects in $\CC$ to the set of cluster variables in $\mathcal A(Q_T,\mathbf x)$. If $\CC$ has finitely many indecomposable objects, it follows that $\mathcal A(Q_T,\mathbf x)$ has finitely many cluster variables and thus $Q_T$ is mutation-equivalent to a Dynkin quiver $Q$. It thus follows from \cite{KR:acyclic} that $\CC$ is triangle-equivalent to the cluster category $\CC_Q$ of $Q$. In this case, for any $\gamma \in K_0(\add T)$ there exists a rigid object $M_\gamma$ in $\CC$ such that $\ind_{\mathcal T}(M_\gamma) = \gamma$. Thus, as in the proof of Theorem \ref{theorem:clustermonomials}, $X(\gamma) = X(f_{M_\gamma})=X^T_{M_\gamma}$ where $f_{M_\gamma}$ is a minimal right-$\add T$-approximation of $M_\gamma$ and thus $X(\gamma) \in \mathcal M(Q_T,\mathbf x)$.
	\end{proof}

	\begin{corol}\label{corol:basetypefini}
		Let $\CC$ be a triangulated 2-Calabi-Yau category with a cluster-tilting object $T$ and whose cones are constructible which respect to $\mathcal T$-morphisms. Assume that cluster-tilting subcategories determine a cluster structure on $\CC$. Assume moreover that $\CC$ has finitely many indecomposable objects up to isomorphisms. Then $\mathcal G^{T}(\CC)$ is a $\Z$-basis in $\mathcal A(Q_T,\mathbf x)$.
	\end{corol}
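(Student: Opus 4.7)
The plan is to derive the corollary as an essentially immediate consequence of Theorem \ref{theorem:finitetype} combined with the known fact (Caldero-Keller) that cluster monomials form a $\Z$-basis in any finite type cluster algebra. So the argument splits into two clean steps with no real substance beyond invoking these results.

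First I would apply Theorem \ref{theorem:finitetype} directly: under the hypotheses of the corollary (triangulated 2-Calabi-Yau, cluster-tilting object $T$, constructible cones with respect to $\mathcal T$-morphisms, cluster structure, and finitely many indecomposables in $\CC$), that theorem asserts the set equality $\mathcal G^{T}(\CC) = \mathcal M(Q_T,\mathbf x)$. So the task reduces to showing that $\mathcal M(Q_T,\mathbf x)$ is a $\Z$-basis of $\mathcal A(Q_T,\mathbf x)$.

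Next I would observe that because $\CC$ has only finitely many indecomposables, the cluster character $X^T_?$ produces only finitely many cluster variables (one for each reachable indecomposable rigid object, via the surjection used in the proof of Theorem \ref{theorem:finitetype}). Hence $\mathcal A(Q_T,\mathbf x)$ is a cluster algebra of finite type, and indeed as noted in the proof of Theorem \ref{theorem:finitetype}, $Q_T$ is mutation equivalent to a Dynkin quiver. The Caldero-Keller theorem from \cite{CK1}, cited already in the introduction of this paper, asserts that in this finite-type situation the cluster monomials form a $\Z$-basis of the cluster algebra. Combining this with the set equality from the first step yields the conclusion.

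There is no real obstacle here: the corollary is packaging Theorem \ref{theorem:finitetype} together with an already quoted classical result, and the only thing to check is that the hypotheses of Theorem \ref{theorem:finitetype} are exactly the hypotheses of the corollary, which they are verbatim. I would write the proof essentially as a single sentence invoking these two results.
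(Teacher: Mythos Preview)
Your proposal is correct and follows exactly the same approach as the paper: invoke Theorem \ref{theorem:finitetype} to get $\mathcal G^{T}(\CC) = \mathcal M(Q_T,\mathbf x)$, note (as in the proof of that theorem) that $\mathcal A(Q_T,\mathbf x)$ is of finite type, and then apply the Caldero--Keller result \cite{CK1} that cluster monomials form a $\Z$-basis in finite type.
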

	\begin{proof}
		We saw in the proof of Theorem \ref{theorem:finitetype} that under these hypotheses $\mathcal A(Q_T,\mathbf x)$ is of finite type. It thus follows from \cite[Corollary 3]{CK1} that $\mathcal M(Q_T,\mathbf x)$ is a $\Z$-linear basis of $\mathcal A(Q_T,\mathbf x)$. The corollary is hence a consequence of Theorem \ref{theorem:finitetype}.
	\end{proof}

\section{Indices and dimension vectors in cluster categories}\label{section:index}
	From now on, we assume that $\CC=\CC_Q$ is the cluster category of an acyclic quiver $Q$ with vertices $\ens{1, \ldots, n}$ and that $T=\kQ$ is the canonical cluster-tilting object in $\CC_Q$. We recall that the cluster category is the orbit category in $D^b(\kQ\modg)$ of the functor $\tau^{-1}[1]$ where $[1]$ is the suspension functor in the bounded derived category $D^b(\kQ\modg)$ and $\tau$ is the Auslander-Reiten translation. It is a canonically triangulated 2-Calabi-Yau category \cite{K} with constructible cones with respect to $\add T$-morphisms \cite{Palu:multiplication}.

	For any $i \in Q_0$, we denote by $S_i$ the simple $\kQ$-module at vertex $i$ and by $P_i$ its projective cover. The set of isomorphism classes of indecomposable objects in $\CC_Q$ can be identified with the disjoint union of the set of isomorphism classes of indecomposable $\kQ$-modules and of shifts of indecomposable projective modules. The canonical cluster-tilting object $T$ can thus be written as $T=\kQ=\bigoplus_{i=1}^nP_i$.

	\subsection{Dimension vectors and indices}
		We shall now compare the notion of index in the cluster category $\CC_Q$ to the notion of dimension vector in the module category $\kQ$-mod.

		For any $\kQ$-module $M$, the \emph{dimension vector} of $M$ is the element $\ddim M=(\dimk \Hom_{\kQ}(P_i,M))_{1 \leq i \leq n} \in \Z_{\geq 0}^n$. Let $K_0(\kQ\modg)$ denote the Grothendieck group of $\k Q$-mod.  It is known that $\ddim$ induces an isomorphism of abelian groups $K_0(kQ\modg) \xrightarrow{\sim} \Z^n$ sending the isoclass of the simple $S_i$ to the $i$-th vector $\alpha_i$ of the canonical basis of $\Z^n$ for any $i \in Q_0$.

		Since $Q$ is acyclic, the path algebra $\kQ$ is finite dimensional and hereditary so that the bilinear form $\<-,-\>$ is well defined on $\Z^n \simeq K_0(\kQ\modg)$. The \emph{Euler matrix} of $Q$ is thus the matrix $E \in M_n(\Z)$ of the (non-symmetric) bilinear form $\<-,-\>$ on $K_0(\kQ\modg)$. We refer the reader to \cite[\S III.3]{ASS} for classical properties of $E$.

		As usual, we identify $\kQ$-mod with the category $\rep(Q)$ of finite dimensional representations of $Q$ over $\k$. We recall that a \emph{representation} $M$ of $Q$ is a pair $M=((M(i))_{i \in Q_0},(M(\alpha))_{\alpha \in Q_1})$ such that each $M(i)$ is a finite dimensional $\k$-vector space and each $M(\alpha)$ is a $\k$-linear map $M(i) \fl M(j)$ where $\alpha:i \fl j \in Q_1$. Note that $\ddim M=(\dimk M(i))_{i \in Q_0}$ for any representation $M$ of $Q$.

		For any $\mathbf d \in \Z_{\geq 0}^n$, we denote by $\rep(Q,\mathbf d)$ the set of representations $M$ of $Q$ such that $\ddim M=\mathbf d$ which is identified with the irreducible affine variety 
		$$\rep(Q,\mathbf d)=\prod_{\alpha: i \fl j \in Q_1} \Hom_{\k}(\k^{d_i},\k^{d_j}),$$
		called \emph{representation space of dimension $\mathbf d$}.

		We define the \emph{dimension vector} $\ddimC M$ of an object $M$ in the cluster category by
		$$\ddimC M =\left\{\begin{array}{ll}
			\ddim M & \textrm{ if $M$ is an indecomposable $\kQ$-module~;}\\
			- (E^t)^{-1} \ddim S_i & \textrm{ if } M \simeq P_i[1]~;\\
			\ddimC M_1 + \ddimC M_2 & \textrm{ if $M=M_1 \oplus M_2$.}
		\end{array}\right.$$
		Since $\ddimC M=\ddim M$ for any $\kQ$-module $M$, we will simply write $\ddim M$ for the dimension vector of an arbitrary object $M$ in $\CC$.

		\begin{rmq}
			Note that our convention for dimension vectors of objects in $\CC_Q$ agrees with the one considered in \cite{IOTW} but differs from the one considered \cite{CK2,Dupont:genericvariables}. Indeed, in \cite{CK2,Dupont:genericvariables}, the dimension vector of $P_i[1]$ was set to $-\ddim S_i$. This was more accurate from the point of view of denominator vectors of the corresponding cluster characters (see for instance \cite[Theorem 3]{CK2} and \cite[Proposition 4.1]{Dupont:genericvariables}). Nevertheless, as we shall see, it appears that the convention of \cite{IOTW} we use here is more natural from the point of view of indices, $\mathbf g$-vectors, virtual generic decompositions and generic characters.
		\end{rmq}

		\begin{lem}\label{lem:explicitindex}
			Let $\CC=\CC_Q$ be the cluster category of an acyclic quiver $Q$. Let $T=\k Q$ be the canonical cluster tilting object in $\CC$ and $\mathcal T=\add T$. Then for any $\kQ$-module $M$ in $\CC$, we have 
			$$\ind_{\mathcal T}(M)=E^t \ddim M,$$
			$$\coind_{\mathcal T}(M)=E \ddim M.$$
		\end{lem}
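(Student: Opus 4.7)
The plan is to exploit the hereditary nature of $\kQ$: every $\kQ$-module $M$ admits minimal resolutions $0 \fl P^1 \fl P^0 \fl M \fl 0$ by projectives and $0 \fl M \fl I^0 \fl I^1 \fl 0$ by injectives, each of length one. The first one lifts directly to a triangle in $\CC_Q$ whose outer terms lie in $\mathcal T = \add \kQ$, hence computes $\ind_{\mathcal T}(M)$; the second one does so after a shift by $[2]$ and a twist through the Nakayama equivalence, hence computes $\coind_{\mathcal T}(M)$. In both cases, the final identification with $\<M,S_i\>$ or $\<S_i,M\>$ is obtained by applying $\Hom_{\kQ}(-,S_i)$ or $\Hom_{\kQ}(S_i,-)$ to the resolution and counting dimensions.

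For the index, the projective resolution gives a triangle $P^1 \fl P^0 \fl M \fl P^1[1]$ in $\CC_Q$, so $\ind_{\mathcal T}(M) = [P^0] - [P^1]$ in $K_0(\mathcal T)$. Writing $P^0 = \bigoplus_j P_j^{a_j}$ and $P^1 = \bigoplus_j P_j^{b_j}$ and using $\dimk \Hom_{\kQ}(P_j, S_i) = \delta_{ij}$ together with $\Ext^1_{\kQ}(P_j,S_i) = 0$, applying $\Hom_{\kQ}(-,S_i)$ to the resolution yields
$$a_i - b_i = \dimk \Hom_{\kQ}(M,S_i) - \dimk \Ext^1_{\kQ}(M,S_i) = \<M, S_i\>.$$
Since $\<M, S_i\>$ equals the $i$-th component of $E^t \ddim M$ under the standard identifications of $K_0(\mathcal T)$ and $K_0(\kQ\modg)$ with $\Z^n$, this proves $\ind_{\mathcal T}(M) = E^t \ddim M$.

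For the coindex, the key point is to realise the injective coresolution inside $\CC_Q$ as a triangle with outer terms of the form $T[2]$, $T \in \mathcal T$. The Serre functor of $D^b(\kQ\modg)$ is $\nu = \tau \circ [1]$ and satisfies $\nu(P_i) \simeq I_i$; since $\CC_Q$ is the orbit category under $\tau^{-1}[1]$, the relation $\tau \simeq [1]$ holds in $\CC_Q$, whence $\nu \simeq [2]$ and $I_i \simeq P_i[2]$ in $\CC_Q$. Writing $I^j = T_M^j[2]$ with $T_M^j \in \mathcal T$ for $j=0,1$, the coresolution lifts to a triangle $M \fl T_M^0[2] \fl T_M^1[2] \fl M[1]$, so $\coind_{\mathcal T}(M) = [T_M^0] - [T_M^1]$. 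The same argument as before, now with $\Hom_{\kQ}(S_i,-)$, $\dimk \Hom_{\kQ}(S_i, I_j) = \delta_{ij}$ and $\Ext^1_{\kQ}(S_i, I_j) = 0$, identifies $(\coind_{\mathcal T}(M))_i$ with $\<S_i, M\> = (E \ddim M)_i$. The main technical obstacle is the identification $I_i \simeq P_i[2]$ in $\CC_Q$, which forces the use of the orbit construction of the cluster category together with the Nakayama equivalence for hereditary algebras; once this is in place, both formulas reduce to the same bookkeeping on the Euler form applied to a length-one resolution.
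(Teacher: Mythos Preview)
Your argument is correct. The paper's proof, by contrast, is a one-line reduction to \cite[Lemma~2.3]{Palu}, which already asserts (in the general 2-Calabi--Yau setting) that the $i$-th component of $\ind_{\mathcal T}(M)$ equals $\<M,S_i\>$ and that of $\coind_{\mathcal T}(M)$ equals $\<S_i,M\>$; the identification with $E^t\ddim M$ and $E\ddim M$ is then immediate from the definition of the Euler matrix. What you have done is essentially reprove that lemma in the hereditary situation by exhibiting explicit triangles from the projective and injective resolutions and reading off the multiplicities via $\Hom_{\kQ}(-,S_i)$ and $\Hom_{\kQ}(S_i,-)$. Your route is thus more self-contained---in particular your verification that $I_i \simeq P_i[2]$ in $\CC_Q$ via $\nu \simeq \tau[1] \simeq [2]$ makes the coindex triangle completely explicit---while the paper's route is shorter and applies verbatim in any 2-Calabi--Yau category, at the cost of importing a result from \cite{Palu}.
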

		\begin{proof}
			Without loss of generality, we can assume that $M$ is indecomposable. Then it follows from \cite[Lemma 2.3]{Palu} that $\ind_{\mathcal T}(M) = (\<M,S_i\>)_{i \in Q_0} = (\ddim M^t E \alpha_i)_{i \in Q_0} = E^t \ddim M$ and $\coind_{\mathcal T}(M) = (\<S_i,M\>)_{i \in Q_0} = (\alpha_i E \ddim M)_{i \in Q_0} = E \ddim M$.
		\end{proof}

		\begin{rmq}
			Note that if $M \simeq P_i[1]$, then $\ind_{\T}(M) = -[P_i] = -\alpha_i = E^t \ddim P_i[1]$ but $\coind_{\T}(M) = -[P_i] = - \alpha _i \neq E \ddim P_i[1]$ so that Lemma \ref{lem:explicitindex} only holds for $\kQ$-modules.
		\end{rmq}

	\subsection{Presentation spaces and index}
		Following the approach in \cite{IOTW}, we will consider presentation spaces in $\kQ$-mod instead of representations spaces in $\rep(Q)$. 

		Given an element $\alpha \in \Z^n$, a \emph{projective decomposition} of $\alpha$ is a pair $(\gamma_0,\gamma_1) \in \Z_{\geq 0}^n \times \Z_{\geq 0}^n$ such that $E^t \alpha=\gamma_0 - \gamma_1$. It is called \emph{minimal} if $\gamma_0$ and $\gamma_1$ have disjoint support. A minimal projective decomposition of a given element $\gamma \in \Z_{\geq 0}^n$ is unique. 

		For any $\gamma \in \Z_{\geq 0}^n$, we set $P(\gamma)=\bigoplus_{i \in Q_0} P_i^{ \gamma_i}$. For any $\alpha \in \Z^n$, the \emph{presentation space} associated to a projective decomposition $(\gamma_0,\gamma_1)$ of $\alpha$ is 
		$$R(\gamma_0,\gamma_1)=\Hom_{\k Q}(P(\gamma_1),P(\gamma_0)).$$
		The \emph{minimal presentation space} $R^{\min}(\alpha)$ is the representation space associated to the minimal projective decomposition of $\alpha$.

		Let $\alpha$ be an element in $\Z^n$, which shall be thought as a dimension vector of an object in $\CC_Q$. Then, by Lemma \ref{lem:explicitindex}, $\gamma = E^t \alpha \in \Z^n$ may be thought as the index of an object in $\CC_Q$. Let $(\gamma_0,\gamma_1)$ be a projective decomposition of $\alpha$ then $\gamma=\gamma_0 - \gamma_1=[P(\gamma_0)]-[P(\gamma_1)]$. The objects $T_0=P(\gamma_0)$ and $T_1=P(\gamma_1)$ belong to $\mathcal T=\add \kQ=\kQ\projg$ and the representation space $R(\gamma_0,\gamma_1)$ is thus isomorphic to $\Hom_{\kQ}(T_1,T_0)$ which is isomorphic to $\Hom_{\CC}(T_1,T_0)$ since $T_1$ is a projective $\kQ$-module \cite{BMRRT}.

		If $(\gamma^{\min}_0, \gamma^{\min}_1)$ is the minimal projective decomposition of $\alpha$, we have $T_0^{\min}(\gamma)=P(\gamma_0^{\min})$ and $T_1^{\min}(\gamma)=P(\gamma_1^{\min})$. Thus, the minimal presentation space $R^{\min}(\alpha)$ is isomorphic to the space of $\mathcal T$-morphisms $\Hom_{\CC}(T_1^{\min}(\gamma),T_0^{\min}(\gamma))$. In particular, the generic morphism of index $\gamma$ can be viewed as a generic element of the minimal presentation space $R^{\min}((E^t)^{-1}\gamma)$ .

	\subsection{$\mathbf g$-vectors}
		Generic variables introduced in \cite{Dupont:genericvariables} were naturally parametrised by their denominator vectors. We now prove that generic characters we just introduced are naturally parame\-trised by their $\mathbf g$-vectors. For details concerning $\mathbf g$-vectors, we refer the reader to \cite{cluster4} in general and \cite{FK} for $\mathbf g$-vectors in the context of cluster characters. 

		Given an object $M$ in $\CC$, we set $\mathbf g_M = -\coind_{\mathcal T}(M)$. Given $\gamma \in K_0(\add T)$, we define the \emph{$\mathbf g$-vector of $X(\gamma)$} as $\mathbf g_{X(\gamma)} = \mathbf g_{\cone(f)}$ for some generic morphism of index $\gamma$ (it follows from the proof of Lemma \ref{lem:Xgen} that this is well-defined). Usually, the $\mathbf g$-vector is only defined for cluster variables in cluster algebras with coefficients but our terminology is motivated by the fact that, under certain assumptions on the coefficient system of the cluster algebra $\mathcal A(Q_T,\mathbf x)$, if $M$ is a rigid object in $\CC$, then $\mathbf g_M$ is indeed the $\mathbf g$-vector of the cluster variable $X_M$ \cite[Theorem 6.3]{FK}. 

		We denote by $C=-E^tE^{-1}$ the \emph{Coxeter matrix} of the path algebra $\kQ$. 
		
		\begin{prop}\label{prop:gvector}
			Let $\CC$ be the cluster category of an acyclic quiver $Q$ and $T=\kQ$ be the canonical cluster-tilting object. Then for any $\gamma \in \Z^n$ such that the cone of a generic morphism of index $\gamma$ is a $\kQ$-module, we have
			$${\bf{g}}_{X(\gamma)}=C^{-1}\gamma.$$
		\end{prop}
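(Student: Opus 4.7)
The plan is to directly unwind the definitions using Lemma \ref{lem:explicitindex}, which gives explicit matrix expressions for index and coindex in terms of the dimension vector. Let $f$ be a generic morphism of index $\gamma$ and set $M = \cone(f)$, which we assume is a $\kQ$-module. By construction, $\gamma = \ind_{\mathcal{T}}(M)$, and by definition $\mathbf{g}_{X(\gamma)} = -\coind_{\mathcal{T}}(M)$.

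Since $M$ is a $\kQ$-module, Lemma \ref{lem:explicitindex} applies and yields
$$\gamma = \ind_{\mathcal{T}}(M) = E^t\, \ddim M \quad \text{and} \quad \coind_{\mathcal{T}}(M) = E\, \ddim M.$$
From the first equation I extract $\ddim M = (E^t)^{-1}\gamma$, and substituting into the second gives
$$\mathbf{g}_{X(\gamma)} = -\coind_{\mathcal{T}}(M) = -E(E^t)^{-1}\gamma.$$

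It only remains to match this with $C^{-1}\gamma$. Since $C = -E^tE^{-1}$, I compute
$$C^{-1} = (-E^tE^{-1})^{-1} = -(E^{-1})^{-1}(E^t)^{-1} = -E(E^t)^{-1},$$
which agrees with the expression just obtained, completing the argument.

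There is no real obstacle here; the statement is essentially a bookkeeping exercise once Lemma \ref{lem:explicitindex} is available. The hypothesis that the cone is a $\kQ$-module is critical because, as noted in the remark following Lemma \ref{lem:explicitindex}, the identity $\coind_{\mathcal{T}}(M) = E\, \ddim M$ fails for shifted projectives $P_i[1]$; thus the formula $\mathbf{g}_{X(\gamma)} = C^{-1}\gamma$ cannot be expected to extend verbatim to arbitrary $\gamma$, and this restriction must be kept explicit throughout.
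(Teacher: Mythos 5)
Your argument is correct and follows exactly the same route as the paper's proof: apply Lemma \ref{lem:explicitindex} to express both $\gamma = \ind_{\mathcal T}(M) = E^t \ddim M$ and $\coind_{\mathcal T}(M) = E \ddim M$, then eliminate $\ddim M$ and identify the result with $C^{-1}\gamma$. The only cosmetic difference is that the paper writes the chain $\gamma = E^t(-E^{-1})\mathbf g_M = C\mathbf g_M$ and inverts at the end, while you invert $E^t$ first; the content is identical.
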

		\begin{proof}
			Fix $\gamma \in \Z^n$ and let $M$ be the cone of a generic morphism of index $\gamma$ which is a $\kQ$-module by hypothesis. By Lemma \ref{lem:explicitindex}, we have $\gamma=\ind_{T}(M)=E^t \ddim M$. Now, by definition, $\mathbf g_{M}=-\coind_{T}(M)$ and thus, Lemma \ref{lem:explicitindex} implies that $-\coind_{T}(M)=-E \ddim M$. 
			It follows that $\gamma = E^t \ddim M =  E^t (-E^{-1}) \mathbf g_{M} = C \mathbf g_M$ so that $\mathbf g_{X(\gamma)}=\mathbf g_M=C^{-1}\gamma$.
		\end{proof}

		\begin{rmq}
			Note that for any $i \in Q_0$, we have $X(-\alpha_i) = X^T_{T_i[1]} = x_i$ so that $\mathbf g_{x_i} = -\alpha_i = -\alpha_i \neq C^{-1} \alpha_i$. Thus, Proposition \ref{prop:gvector} does not hold if the cone of a generic morphism of index $\gamma$ is not a $\kQ$-module.
		\end{rmq}

		\begin{corol}
			Let $M$ be a $\kQ$-module, and $P_0^M \xrightarrow{f_M} P_1^M \fl M \fl 0$ be a projective presentation of $\kQ$-modules. Then,
			$$\delta(f_M)=C\mathbf g_M.$$
		\end{corol}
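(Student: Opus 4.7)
The statement should follow by direct translation: the projective presentation lifts to a triangle in $\CC_Q$, whose index is exactly the $\delta$-vector of $f_M$, and the definition of the $\mathbf g$-vector together with the formulae of Lemma \ref{lem:explicitindex} reduces the identity to a matrix manipulation.

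More precisely, the plan is to observe first that the short exact sequence of $\kQ$-modules $P_0^M \xrightarrow{f_M} P_1^M \fl M \fl 0$ gives rise in $\CC_Q$ to a triangle $P_0^M \xrightarrow{f_M} P_1^M \fl M \fl P_0^M[1]$, which has exactly the form used to define $\ind_{\mathcal T}(M)$ with $T_0 = P_1^M$ and $T_1 = P_0^M$. Hence $\ind_{\mathcal T}(M) = [P_1^M] - [P_0^M]$, which by definition of the $\delta$-vector of a morphism between projectives is $\delta(f_M)$. Applying Lemma \ref{lem:explicitindex} to the $\kQ$-module $M$ then yields
\[
\delta(f_M) \;=\; \ind_{\mathcal T}(M) \;=\; E^t\,\ddim M.
\]

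For the right-hand side, the definition $\mathbf g_M = -\coind_{\mathcal T}(M)$ together with the second formula of Lemma \ref{lem:explicitindex} gives $\mathbf g_M = -E\,\ddim M$. Using $C = -E^t E^{-1}$, a direct computation yields
\[
C\,\mathbf g_M \;=\; (-E^t E^{-1})(-E\,\ddim M) \;=\; E^t\,\ddim M,
\]
which matches the expression above for $\delta(f_M)$, concluding the proof.

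There is essentially no obstacle here: once one notes that a projective presentation of a $\kQ$-module literally is a distinguished triangle of the required type in $\CC_Q$, the statement is a one-line consequence of Lemma \ref{lem:explicitindex} and the definition of the Coxeter matrix. The only point to double-check is the sign and indexing convention in the corollary's presentation $P_0^M \to P_1^M \to M \to 0$, so that $\delta(f_M) = [P_1^M] - [P_0^M]$ is correctly identified with $\ind_{\mathcal T}(M)$ and not its opposite.
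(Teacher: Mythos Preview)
Your proof is correct and follows essentially the same route as the paper's own argument. The paper compresses your computation into the single line $\delta(f_M)=\ind_{T}(\cone(f_M))=\ind_{T}(M)=E^t \ddim M=C\mathbf g_M$, invoking Remark~\ref{rmq:deltavector} for the first equality where you instead spell out that the presentation yields the triangle used to compute the index; both arguments tacitly use that the presentation is a short exact sequence (i.e.\ that $f_M$ is injective, which holds for a minimal presentation over the hereditary algebra $\kQ$) so that $\cone(f_M)\simeq M$.
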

		\begin{proof}
			It follows from Remark \ref{rmq:deltavector} that $\delta(f_M)=\ind_{T}(\cone(f_M))=\ind_{T}(M)=E^t \ddim M=C\mathbf g_M$.
		\end{proof}

\section{Generic cluster characters and virtual generic decomposition}\label{section:genericdcp}
	In this section, we still assume that $\CC=\CC_Q$ is the cluster category of an acyclic quiver $Q$ and that $T=\kQ$ is the canonical cluster-tilting object in $\CC$. 

	Following \cite{Schofield:generalrepresentations}, given two elements $\beta,\gamma \in \Z_{\geq 0}^n$, we say that the extension $\Ext^1_{\kQ}(\beta,\gamma)$ \emph{vanishes generally} if there exist Zariski dense open subsets $O_\beta \subset \rep(Q,\beta)$, $O_{\gamma} \subset \rep(Q,\gamma)$ such that $\Ext^1_{\kQ}(M_\beta,M_\gamma)=0$ for any $M_\beta \in O_\beta$ and $M_\gamma \in O_\gamma$. 

	Given an element $\alpha \in \Z_{\geq 0}^n$, Kac proved in \cite{Kac:infroot1} the existence of a unique decomposition
	$$\alpha=\beta_1 + \cdots + \beta_k$$
	such that~:
	\begin{enumerate}
		\item $\Ext^1_{\kQ}(\beta_i,\beta_j)$ vanishes generally if $i \neq j$~;
		\item each $\beta_i$ is a Schur root of $Q$.
	\end{enumerate}
	This decomposition is called the \emph{generic (or canonical) decomposition of $\alpha$}.

	This was generalised in \cite{IOTW} to arbitrary elements in $\Z^n$. Namely, for any $\alpha \in \Z^n$ there exists a unique decomposition 
	$$\alpha=\beta_1 + \cdots + \beta_k - (E^t)^{-1}\gamma$$
	such that~:
	\begin{enumerate}
		\item $\beta_1, \ldots, \beta_k,\gamma \in \Z_{\geq 0}^n$~;
		\item $\beta_i$ and $\gamma$ have disjoint support for all $i$~;
		\item $\Ext^1_{\kQ}(\beta_i,\beta_j)$ vanishes generally if $i \neq j$~;
		\item each $\beta_i$ is a Schur root of $Q$.
	\end{enumerate}
	This decomposition is called the \emph{virtual generic decomposition of $\alpha$}. Note that if $\alpha \in \Z_{\geq 0}^n$, generic and virtual generic decomposition coincide.
	
	\begin{theorem}\label{theorem:multvirtual}
		Let $\CC$ be the cluster category of an acyclic quiver $Q$ and $T=\kQ$ be the canonical cluster-tilting object in $\CC$. Let $\alpha \in \Z^n$ with virtual generic decomposition
		$$\alpha=\beta_1 + \cdots + \beta_k - (E^t)^{-1}\gamma.$$
		Then,
		$$X(E^t\alpha)=X(E^t \beta_1) \cdots X(E^t\beta_k)X(-\gamma).$$
	\end{theorem}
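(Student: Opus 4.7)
The plan is to identify the cone of a generic morphism of index $E^t\alpha$ with a specific direct sum of objects in $\CC_Q$, using the virtual generic decomposition of \cite{IOTW}, and then conclude by the multiplicativity of the cluster character under direct sums.

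First I would set up the candidate decomposition. For each Schur root $\beta_i$, let $M_{\beta_i}$ denote a generic representation of $Q$ of dimension vector $\beta_i$, and consider the object
$$M = M_{\beta_1} \oplus \cdots \oplus M_{\beta_k} \oplus P(\gamma)[1]$$
of $\CC_Q$. Using Lemma \ref{lem:explicitindex} together with the defining property $\ind_{\mathcal T}(P_j[1]) = -\alpha_j$, I would compute
$$\ind_{\mathcal T}(M) = \sum_{i=1}^k E^t \beta_i - \gamma = E^t \alpha,$$
so that $M$ is a natural candidate for the generic cone of index $E^t\alpha$.

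Next I would produce an explicit $\mathcal T$-morphism with cone $M$: take a minimal projective presentation $f_{\beta_i} : P_1^{(i)} \to P_0^{(i)}$ of each $M_{\beta_i}$ in $\kQ\modg$ (so that $\cone(f_{\beta_i}) \simeq M_{\beta_i}$ in $\CC_Q$) together with the zero morphism $f_\gamma : P(\gamma) \to 0$ whose cone is $P(\gamma)[1]$. The direct sum $f = f_{\beta_1} \oplus \cdots \oplus f_{\beta_k} \oplus f_\gamma$ is a $\mathcal T$-morphism of index $E^t\alpha$ with $\cone(f) \simeq M$.

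The central step is to argue that $f$, viewed after stabilisation as an element of the minimal presentation space $\Hom_{\CC}(T_1^{\min}(E^t\alpha), T_0^{\min}(E^t\alpha))$ (cf.\ Corollary \ref{corol:K0param}), lies in the dense open subset $U_{(T_1^{\min}(E^t\alpha),\, T_0^{\min}(E^t\alpha))}$ of Lemma \ref{lem:Xgen}. Translating through Lemma \ref{lem:genericpresentation} and Corollary \ref{corol:presentation}, this amounts to showing that $F_T f$ is a general presentation in $\kQ\modg$ of $\delta$-vector $E^t\alpha$. I would deduce this from the four defining properties of the virtual generic decomposition: the generic Ext-vanishing $\Ext^1_{\kQ}(\beta_i,\beta_j) = 0$ for $i \neq j$, the Schur property of each $\beta_i$, the non-negativity of $\gamma$, and the disjoint-support condition between the $\beta_i$ and $\gamma$. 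Together, these imply, by an adaptation of Schofield's original argument in the hereditary case as carried out in \cite{IOTW} and \cite{DF:generalpresentations}, that the $\Aut(T_0)^{\op}\times \Aut(T_1)$-orbit of the block-diagonal presentation $f$ is Zariski dense in $\Hom_{\CC}(T_1, T_0)$. This step is the main obstacle: everything else is formal, but this genericity statement is precisely the content of virtual generic decomposition at the level of presentation spaces.

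Once this is established, the conclusion follows from the definition of the cluster character and its multiplicativity under direct sums:
$$X(E^t\alpha) = X^T_{\cone(f)} = X^T_{M_{\beta_1}} \cdots X^T_{M_{\beta_k}} \, X^T_{P(\gamma)[1]} = X(E^t\beta_1) \cdots X(E^t\beta_k) \, X(-\gamma),$$
where each equality $X^T_{M_{\beta_i}} = X(E^t\beta_i)$ follows from applying the same argument to the single Schur root $\beta_i$ (whose virtual generic decomposition is trivial), and $X^T_{P(\gamma)[1]} = X(-\gamma)$ follows from the fact that the unique morphism $P(\gamma) \to 0$ has index $-\gamma$ and cone $P(\gamma)[1]$.
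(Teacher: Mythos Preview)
Your proposal is correct and follows essentially the same route as the paper: build the candidate cone as $M_{\beta_1}\oplus\cdots\oplus M_{\beta_k}\oplus P(\gamma)[1]$, realise it as the cone of a block-diagonal presentation, invoke \cite{IOTW} for the genericity of that presentation, and conclude by multiplicativity of $X^T_?$. The paper's proof differs only in that it uses the \emph{canonical} projective presentation of each $M_i$ (rather than a minimal one) and cites the precise statement \cite[Theorem 6.3.1]{IOTW} for the central step you correctly identify as the main obstacle; via the Stability Theorem both choices of presentation yield the same generic value, so this distinction is immaterial.
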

	\begin{proof}
		Let $\alpha \in \Z^n$ with virtual canonical decomposition $\alpha=\beta_1 + \cdots + \beta_k - (E^t)^{-1}\gamma$. For any $i=1, \ldots, k$, let $M_i$ be a generic representation in $\rep(Q,\beta_i)$. For any $i=1, \ldots, k$, we denote by $p_{M_i}$ the so-called \emph{canonical projective presentation} of $M_i$:
		$$0 \fl P^{M_i}_1 \xrightarrow{p_{M_i}} P^{M_i}_0 \fl M \fl 0$$
		where $$P^{M_i}_1 = \bigoplus_{u \fl v \in Q_1} P_v^{ \beta_i(u)} \textrm{ and } P^{M_i}_0 = \bigoplus_{v \in Q_0} P_v^{ \beta_i(v)}$$
		and let $0_{\gamma}$ be the zero map $0_{\gamma}:P(\gamma) \fl 0$. Then, it follows from \cite[Theorem 6.3.1]{IOTW} that $$X(E^t\alpha) = X\left(\bigoplus_{i=1}^k p_{M_i} \oplus 0_{\gamma}\right) = \prod_{i=1}^k X(p_{M_i}) X(0_{\gamma}).$$
		Each $p_{M_i}$ is generic so that $X(p_{M_i})=X(\ind_{T}(M_i))=X(E^t \ddim M_i)=X(E^t \beta_i)$ and also, $0_{\gamma}$ is generic so that $X(0_\gamma)=X(\ind_{T}(P_{\gamma}[1]))=X(-\gamma)$. This finishes the proof.
	\end{proof}

\section{Generic cluster characters and classical generic variables}\label{section:XetCC}
	In this section, $\CC$ still denotes the cluster category of an acyclic quiver $Q$ and $T=\kQ$ is the canonical cluster-tilting object in $\CC$. In this case, the cluster character $X^{T}_?$ coincides with the Caldero-Chapoton map $CC:\Ob(\CC) \fl \Z[\mathbf x^{\pm 1}]$ introduced in \cite{CC,CK2}. 
	In \cite{Dupont:genericvariables}, the author introduced a family of Laurent polynomials in $\Z[\mathbf x^{\pm 1}]$, called \emph{generic variables}, by considering generic values of the restriction of the Caldero-Chapoton map to representation spaces. We shall now see that generic characters coincide with these generic variables when $\CC=\CC_Q$ and $T=\kQ$. First, we briefly review the construction of \cite{Dupont:genericvariables}.

	\subsection{Generic variables in acyclic cluster algebras}
		For any $\alpha \in \Z_{\geq 0}^n$, there exists a unique Laurent polynomial $CC(\alpha)$ such that $CC$ is constant equal to $CC(\alpha)$ on a dense open subset $U_{\alpha} \subset \rep(Q,\alpha)$ \cite[Corollary 2.4]{Dupont:genericvariables}.

		More generally, if $\alpha \in \Z^n$, let $\alpha_{0},\alpha_1 \in \Z_{\geq 0}^n$ having disjoint support such that $\alpha=\alpha_0 - \alpha_1$ and set
		$$CC(\alpha)=CC(\alpha_0)\mathbf x^{\alpha_1}=CC(\alpha_0)CC(P(\alpha_1)[1]).$$
		The set 
		$$\mathcal G(Q)=\ens{CC(\alpha)\, |\, \alpha \in \Z^n}$$
		is called the set of \emph{generic variables} in $\mathcal A(Q)$ and for any $\alpha \in \Z^n$, $CC(\alpha)$ is called the \emph{generic variable of dimension $\alpha$}. 

		Let $\alpha, \beta \in \Z^n$ such that $\alpha=\alpha_0 - \alpha_1$ (resp. $\beta=\beta_0 - \beta_1$) where $\alpha_1,\alpha_0$ (resp. $\beta_1,\beta_0$) have disjoint support. Following \cite{Dupont:genericvariables}, we say that $\Ext^1_{\CC}(\alpha,\beta)$ \emph{vanishes generally} if there exists $M_\alpha \in \rep(Q,\alpha_0), M_\beta \in \rep(Q,\beta_0)$ such that 
		$$\Ext^1_{\CC}(M_\alpha \oplus P(\alpha_1)[1],M_\beta \oplus P(\beta_1)[1])=0.$$
		Generic variables are multiplicative if there are no generic extensions in the cluster category in the sense that $$CC(\alpha+\beta)=CC(\alpha)CC(\beta).$$
		if $\Ext^1_{\CC}(\alpha,\beta)$ vanishes generally \cite[Lemma 3.5]{Dupont:genericvariables}. In particular, they are compatible with Kac's generic decomposition in the sense that for $\alpha \in \Z_{\geq 0}^n$, if
		$$\alpha=\beta_1 + \cdots + \beta_k$$
		is the generic decomposition of $\alpha$, then
		$$CC(\alpha)=CC(\beta_1) \cdots CC(\beta_k).$$

		We recall that if $L$ is a Laurent polynomial in $\Z[\mathbf x^{\pm 1}]$, its \emph{denominator vector} is the unique vector $\mathbf d \in \Z^n$ such that there exists a polynomial $P(x_1, \ldots, x_n)$ not divisible by any $x_i$ such that $L=P(x_1, \ldots, x_n)/\mathbf x^{\mathbf d}$. Generic variables are parametrised by denominator vectors in the sense that for $\alpha \in \Z^n$, the denominator vector of $CC(\alpha)$ is $\alpha$. 

	\subsection{Generic variables and generic characters}
		We now prove that the set of generic variables coincides with the set of generic characters. The philosophy underlying the proof of this fact is that these two sets are naturally parametrised by $\Z^n$. The set $\mathcal G(Q)$ of generic variables is naturally parametrised by denominator vectors which essentially correspond to dimension vectors of $\kQ$-modules. The set $\mathcal G^{\kQ}(\CC_Q)$ is parametrised using the index in the corresponding cluster category. Going from one parametrisation to the other correspond to the base change induced by the matrix $E^t$ between the two natural bases of the lattice $\Z^n$, the first one consisting of dimension vectors of simple modules, the second one of dimension vectors of indecomposable projective modules. Nevertheless, due to the fact that we do not use the same convention for dimension vectors of shifts of indecomposable modules as in \cite{Dupont:genericvariables}, the change of parametrisation between $\mathcal G(Q)$ and $\mathcal G^{\kQ}(\CC_Q)$ will be slightly more complicated.

		\begin{lem}\label{lem:XetCCpositif}
			For any $\alpha \in \Z_{\geq 0}^n$, 
			$$X(E^t\alpha)=CC(\alpha).$$
		\end{lem}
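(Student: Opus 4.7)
The plan is to reduce to the case of a Schur root via the (virtual) generic decomposition, where the two constructions coincide by essentially trivial means. Fix $\alpha \in \Z_{\geq 0}^n$ and let
$$\alpha = \beta_1 + \cdots + \beta_k$$
be its generic decomposition in the sense of Kac. Since $\alpha \geq 0$, no negative summand appears, so this coincides with the virtual generic decomposition of \cite{IOTW}, i.e.\ one has $\gamma = 0$ in the statement of Theorem \ref{theorem:multvirtual}.

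First, I would apply Theorem \ref{theorem:multvirtual} to $\alpha$. Observing that $X(-0) = X^{T}_{\cone(0\to 0)} = X^{T}_0 = 1$, it yields
$$X(E^t\alpha) = \prod_{i=1}^k X(E^t\beta_i).$$
Second, I would use the multiplicativity of generic variables on the generic decomposition (\cite[Lemma 3.5]{Dupont:genericvariables}, recalled in the subsection above) to obtain
$$CC(\alpha) = \prod_{i=1}^k CC(\beta_i).$$
The proof then reduces to showing that $X(E^t\beta) = CC(\beta)$ for every Schur root $\beta$ of $Q$.

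For a Schur root $\beta$, pick a generic representation $M \in \rep(Q,\beta)$ and let $p_M : P^M_1 \fl P^M_0$ be its canonical (minimal) projective presentation. In $\CC_Q$ the cone of $p_M$ is isomorphic to $M$ itself, and by Lemma \ref{lem:explicitindex}, $\ind_{\mathcal T}(M) = E^t\ddim M = E^t\beta$, so $p_M$ has index $E^t\beta$ in the sense of Section \ref{section:index}. The key point, which is exactly the input from \cite[Theorem 6.3.1]{IOTW} already used in the proof of Theorem \ref{theorem:multvirtual}, is that $p_M$ is a generic morphism in the corresponding presentation space. Hence
$$X(E^t\beta) = X(p_M) = X^{T}_{\cone(p_M)} = X^{T}_M = CC(M) = CC(\beta),$$
the last equality coming from the definition of $CC(\beta)$ as the generic value of $CC$ on $\rep(Q,\beta)$ and the fact that $M$ was chosen generic.

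The main obstacle is the genericity of the canonical projective presentation of a generic Schur representation; but this is precisely the content of \cite[Theorem 6.3.1]{IOTW} on which Theorem \ref{theorem:multvirtual} rests, so no new work is required here. Assembling the three displays above completes the proof.
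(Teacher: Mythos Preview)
Your argument is correct, but it takes a different and somewhat more circuitous route than the paper's own proof.

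The paper argues directly for an arbitrary $\alpha \in \Z_{\geq 0}^n$, without decomposing. It takes the canonical projective decomposition $E^t\alpha = \gamma_0 - \gamma_1$ of \cite{IOTW} and considers the cokernel map
$$\kappa : \Hom_{\kQ}(P(\gamma_1),P(\gamma_0)) \fl \rep(Q,\alpha), \qquad g \mapsto \coker(g).$$
Using \cite[Proposition 4.1.7]{IOTW}, $\kappa$ is algebraic and injective on a dense open subset, so the preimage of the generic locus $U_\alpha \subset \rep(Q,\alpha)$ for $CC$ is dense open in the presentation space. Hence one may choose the generic morphism $f$ so that $\coker(F_T f) \in U_\alpha$, and then $X(E^t\alpha) = X(f) = CC(\coker F_T f) = CC(\alpha)$ immediately.

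Your approach instead invokes Theorem~\ref{theorem:multvirtual} to reduce to Schur roots, and then for a Schur root uses the reverse direction: a generic representation $M$ has generic canonical presentation $p_M$. This is valid, and indeed the paper's proof of Theorem~\ref{theorem:multvirtual} already asserts that each $p_{M_i}$ is generic, so you are not introducing any new unproved input. What your approach buys is that you reuse Theorem~\ref{theorem:multvirtual} rather than citing a separate geometric result (Proposition 4.1.7) from \cite{IOTW}. What it costs is the detour through the generic decomposition: the identification $\rep(Q,\alpha) \simeq \Hom_{\kQ}(P(\gamma_1),P(\gamma_0))$ via $M \mapsto p_M$ is an isomorphism of varieties for \emph{any} $\alpha \in \Z_{\geq 0}^n$, not just for Schur roots, so the reduction step through Theorem~\ref{theorem:multvirtual} and the multiplicativity of $CC$ is unnecessary. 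You could have run your final paragraph directly with $\alpha$ in place of $\beta$ and obtained the same conclusion in one step, which is essentially the paper's argument read in the opposite direction.
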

		\begin{proof}
			Let $\alpha \in \Z_{\geq 0}^n$. For any projective decomposition $E^t\alpha=\gamma_0-\gamma_1$, $X(E^t\alpha)=X(f)$ for some generic element $f \in \Hom_{kQ}(P(\gamma_1),P(\gamma_0))$. We can thus assume that $E^t\alpha=\gamma_0-\gamma_1$ is the projective canonical decomposition of \cite{IOTW}. Consider the triangle
			$$P(\gamma_1) \xrightarrow{f} P(\gamma_0) \fl \cone(f) \fl P(\gamma_1)[1].$$
			Applying $F_{T}$, we get 
			$$P(\gamma_1) \xrightarrow{F_{T}f} P(\gamma_0) \fl F_{T}(\cone(f)) \fl 0$$
			so that $X(f)=X^{T}_{\cone(F_{T}(f))}=X^{T}_{\coker F_{T}(f)}.$ According to Corollary \ref{corol:presentation}, $F_{T}f$ is generic in $\Hom_{kQ}(P(\gamma_1),P(\gamma_0))$. Thus, we get 
			$$X(f)=X^{T}(\cone f)=CC(\coker F_{T} f).$$
			Consider the map $\kappa: g \mapsto \coker g$ on $\Hom_{kQ}(P(\gamma_1),P(\gamma_0))$. For any morphism $g \in \Hom_{kQ}(P(\gamma_1),P(\gamma_0))$, we have $\ddim \coker(g)=\ddim P(\gamma_0) - \ddim P(\gamma_1)=(E^t)^{-1}(\gamma_0 - \gamma_1)=\alpha$ so that $\kappa$ is a map 
			$$\kappa:\Hom_{kQ}(P(\gamma_1),P(\gamma_0)) \fl \rep(Q,\alpha).$$
			Now, it follows from \cite[Proposition 4.1.7]{IOTW} that there exists a Zariski dense open subset $\mathcal U \subset \Hom_{kQ}(P(\gamma_1),P(\gamma_0))$ such that $\kappa_{|\mathcal U}:\mathcal U \fl \rep(Q,\alpha)$ is algebraic and injective. Thus, $\dim \kappa(\mathcal U) \geq \dim \mathcal U > 0$ and thus $\kappa(\mathcal U) \cap U_{\alpha} \neq \emptyset$ where $U_{\alpha}$ denotes, as before, the Zariski dense open subset for the Caldero-Chapoton map.
			It follows that $\kappa^{-1}(U_{\alpha})$ is a Zariski dense open subset in $\Hom_{kQ}(P(\gamma_1),P(\gamma_0))$ and we can thus assume that the generic element $f$ belongs to $\kappa^{-1}(U_{\alpha})$. It follows that 
			$$X(E^t\alpha)=X(f)=CC(\coker F_{T}f)=CC(\alpha)$$
			which proves the lemma.
		\end{proof}

		\begin{lem}\label{lem:XetCCnegatif}
			For any $\alpha \in \Z_{\leq 0}^n$, 
			$$X(\alpha)=CC(\alpha).$$
		\end{lem}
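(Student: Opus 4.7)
The plan is to observe that for $\alpha \in \Z_{\leq 0}^n$ both sides of the claimed equality collapse to the monomial $\mathbf{x}^{-\alpha}$, so the proof is really a direct unpacking of the two definitions. The only mild subtlety is the asymmetry between Lemma \ref{lem:XetCCpositif} (where the index $E^t\alpha$ appears) and the present statement (where the index is simply $\alpha$); this reflects the author's convention $\ind_{\mathcal T}(P_i[1]) = -\alpha_i$, which does not factor through $E^t$ for shifts of projectives.

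Concretely, I would set $\beta = -\alpha \in \Z_{\geq 0}^n$ and first compute the left-hand side. Since $\alpha = 0 - \beta$ with $0$ and $\beta$ of disjoint support (trivially), the minimal projective decomposition of $\alpha$ in $K_0(\mathcal T)$ is $T_0^{\min}(\alpha) = 0$ and $T_1^{\min}(\alpha) = P(\beta)$. The corresponding space of $\mathcal T$-morphisms $\Hom_{\CC}(P(\beta), 0) = \{0\}$ is a single point, so the unique (hence generic) morphism of index $\alpha$ is the zero map, whose cone fits into the triangle
$$P(\beta) \xrightarrow{0} 0 \fl P(\beta)[1] \fl P(\beta)[2].$$
Therefore $X(\alpha) = X^T_{P(\beta)[1]}$, and from the multiplicativity of the cluster character on direct sums together with the rule $X^T_{P_i[1]} = x_i$, we obtain
$$X(\alpha) = \prod_{i=1}^n x_i^{\beta_i} = \mathbf{x}^{\beta} = \mathbf{x}^{-\alpha}.$$

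For the right-hand side, I would simply apply the definition of generic variables recalled above: writing $\alpha = 0 - \beta$ with $\alpha_0 = 0$ and $\alpha_1 = \beta$ of disjoint support,
$$CC(\alpha) = CC(0)\, \mathbf{x}^{\alpha_1} = 1 \cdot \mathbf{x}^{\beta} = \mathbf{x}^{-\alpha},$$
where $CC(0) = 1$ since $\rep(Q,0)$ is reduced to the zero representation (whose Caldero-Chapoton evaluation is the empty product). Comparing the two expressions gives $X(\alpha) = CC(\alpha)$, as required.

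There is essentially no hard step here; the only thing to keep track of is that the Lemma \ref{lem:XetCCpositif} convention $X(E^t\alpha) = CC(\alpha)$ on the positive cone is replaced by $X(\alpha) = CC(\alpha)$ on the negative cone, precisely because $E^t \ddim P_i[1] = -\alpha_i$ would only hold if we accepted the negative convention $\ddim P_i[1] = -(E^t)^{-1}\alpha_i$, whereas the \emph{index} of $P_i[1]$ is directly $-\alpha_i$ with no matrix involved. These two lemmas together will later be combined (presumably via the multiplicativity statement of Theorem \ref{theorem:multvirtual} applied to the virtual generic decomposition) to identify $\mathcal G^T(\CC_Q)$ with $\mathcal G(Q)$ on all of $\Z^n$.
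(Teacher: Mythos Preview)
Your proof is correct and follows essentially the same route as the paper: identify the generic morphism of index $\alpha$ as the zero map $P(-\alpha)\to 0$, recognise its cone as $P(-\alpha)[1]$, and then match $X^T_{P(-\alpha)[1]}$ with $CC(\alpha)$ via the definitions. One small slip: the last term of your displayed triangle should be $P(\beta)[1]$, not $P(\beta)[2]$ (a distinguished triangle has the form $X\to Y\to Z\to X[1]$), but this does not affect the argument since you correctly identify the cone.
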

		\begin{proof}
			Let $\gamma \in \Z_{\geq 0}^n$ such that $\alpha=-\gamma$. Consider the morphism $0_{\gamma}: \Hom_{kQ}(P(\gamma),0)$. We have the triangle 
			$$P(\gamma) \xrightarrow{0_{\gamma}} 0 \fl P(\gamma)[1] \xrightarrow{\sim} P(\gamma)[1]$$
			so that $$X(0_{\gamma})=X^{T}_{P(\gamma)[1]}=CC(P(\gamma)[1])=CC(-\gamma)=CC(\alpha).$$
			Now, by definition, $X(f)$ is the generic character of index $\alpha$, that is, $X(\alpha)=X(0_{\gamma})=CC(\alpha)$.
		\end{proof}

		\begin{prop}\label{prop:XetCC}
			Let $\CC=\CC_Q$ be the cluster category of an acyclic quiver $Q$ and $T=\kQ$ be the canonical cluster-tilting object in $\CC$. Let $\alpha \in \Z^n$ with virtual generic decomposition
			$$\alpha=\beta_1 + \cdots + \beta_k - (E^t)^{-1}\gamma.$$
			Then,
			$$X(E^t\alpha)=CC(\beta_1 + \cdots + \beta_k - \gamma).$$
		\end{prop}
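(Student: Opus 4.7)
The proposition follows from assembling tools already in place. The plan is to apply Theorem \ref{theorem:multvirtual} to split $X(E^t\alpha)$ as a product of characters attached to the summands of the virtual generic decomposition, then translate each factor into a value of the Caldero-Chapoton map via Lemmas \ref{lem:XetCCpositif} and \ref{lem:XetCCnegatif}, and finally recombine the Caldero-Chapoton factors into a single generic variable.

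More precisely, given the virtual generic decomposition $\alpha = \beta_1 + \cdots + \beta_k - (E^t)^{-1}\gamma$, Theorem \ref{theorem:multvirtual} gives
\[
X(E^t\alpha) = X(E^t\beta_1) \cdots X(E^t\beta_k) \, X(-\gamma).
\]
Since each $\beta_i \in \Z_{\geq 0}^n$, Lemma \ref{lem:XetCCpositif} rewrites $X(E^t\beta_i) = CC(\beta_i)$, and Lemma \ref{lem:XetCCnegatif} gives $X(-\gamma) = CC(-\gamma) = \mathbf x^{\gamma}$. Setting $\beta = \beta_1 + \cdots + \beta_k$, I would then invoke the multiplicativity of generic variables under Kac's generic decomposition (recalled in Section \ref{section:XetCC} and proved in \cite[Lemma 3.5]{Dupont:genericvariables}) to obtain $CC(\beta_1) \cdots CC(\beta_k) = CC(\beta)$, since the conditions defining the virtual generic decomposition include that $\Ext^1_{\kQ}(\beta_i,\beta_j)$ vanishes generally for $i \neq j$.

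It remains to reassemble the positive and negative parts. Because the definition of the virtual generic decomposition forces $\beta$ and $\gamma$ to have disjoint support, the definition of $CC$ on arbitrary elements of $\Z^n$ gives precisely
\[
CC(\beta - \gamma) = CC(\beta)\, \mathbf x^{\gamma}.
\]
Combining the identities above yields $X(E^t\alpha) = CC(\beta) \mathbf x^{\gamma} = CC(\beta_1 + \cdots + \beta_k - \gamma)$, as desired.

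The only subtle point is checking that the disjoint-support hypothesis in the virtual generic decomposition is exactly what is needed to apply both the multiplicativity $CC(\beta) = \prod CC(\beta_i)$ (already built into the generic variable formalism, since each $\beta_i$ is a Schur root with generic Ext-vanishing) and the formula $CC(\beta - \gamma) = CC(\beta)\mathbf x^{\gamma}$ (which requires $\beta$ and $\gamma$ to have disjoint support so that $\beta - \gamma = \beta_0 - \beta_1$ is the canonical positive/negative decomposition of $\beta - \gamma$). Both conditions are immediate from the defining properties of the virtual generic decomposition recalled at the start of Section \ref{section:genericdcp}, so this step is essentially bookkeeping rather than a genuine obstacle; the real content was already established in Theorem \ref{theorem:multvirtual} and the two lemmas.
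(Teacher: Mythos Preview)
Your proof is correct and follows essentially the same route as the paper's: apply Theorem \ref{theorem:multvirtual}, convert each factor via Lemmas \ref{lem:XetCCpositif} and \ref{lem:XetCCnegatif}, then recombine using the multiplicativity of $CC$ under the generic decomposition and the disjoint-support condition between $\beta$ and $\gamma$. The only cosmetic difference is that for the final recombination you invoke the definition of $CC(\beta-\gamma)=CC(\beta)\mathbf x^{\gamma}$ directly, whereas the paper phrases it as the vanishing of $\Ext^1_{\CC}(\sum_i\beta_i,-\gamma)$ together with \cite[Lemma 3.5]{Dupont:genericvariables}; these amount to the same thing.
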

		\begin{proof}
			According to Theorem \ref{theorem:multvirtual}, we have 
			$$X(E^t\alpha)=X(E^t\beta_1)\cdots X(E^t\beta_k)X(-\gamma).$$
			For any $i=1, \ldots, k$, it follows from Lemma \ref{lem:XetCCnegatif} that $X(E^t\beta_i)=CC(\beta_i)$. Also, it follows from Lemma \ref{lem:XetCCnegatif} that $X(-\gamma)=CC(-\gamma)$. Now, $\beta_1 + \cdots + \beta_k$ is the (classical) generic decomposition of $\sum_{i=1}^k \beta_i$
			so that 
			$$CC(\beta_1)\cdots CC(\beta_k)=CC(\beta_1 + \cdots + \beta_k).$$
			Moreover, $\beta_i$ and $\gamma$ have disjoint support for any $i \in \ens{1, \ldots, k}$ so that $\gamma$ and $\sum_{i=1}^k \beta_i$ have disjoint support and $\Ext^1_{\CC}(\sum_{i=1}^k \beta_i,-\gamma)$ vanishes generally. 
			Thus, $$CC(\sum_{i=1}^k \beta_i)CC(-\gamma)=CC(\sum_{i=1}^k \beta_i-\gamma)$$
			which proves the proposition.
		\end{proof}

		\begin{theorem}\label{theorem:generalisation}
			Let $\CC=\CC_Q$ be the cluster category of an acyclic quiver $Q$ and $T=\kQ$ be the canonical cluster-tilting object in $\CC$. Then, 
			$$\mathcal G(Q)=\mathcal G^{T}(\CC).$$
		\end{theorem}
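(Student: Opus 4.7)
The plan is to derive both inclusions as immediate consequences of Proposition \ref{prop:XetCC}, which is essentially the change-of-parametrisation formula between the dimension-vector parametrisation of $\mathcal G(Q)$ and the index parametrisation of $\mathcal G^{T}(\CC)$. A preliminary observation is that since $Q$ is acyclic, one may order the vertices so that $E$ is unitriangular; hence $\det E = \pm 1$ and both $E^t$ and $(E^t)^{-1}$ are integer-valued automorphisms of $\Z^n$. Consequently the map $\alpha \mapsto E^t \alpha$ is a bijection $\Z^n \xrightarrow{\sim} K_0(\mathcal T)$, which is the key point that makes the argument go through.

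For the inclusion $\mathcal G(Q) \subseteq \mathcal G^{T}(\CC)$, I would start with a generic variable $CC(\delta)$ for some $\delta \in \Z^n$, and write $\delta = \delta_0 - \delta_1$ with $\delta_0,\delta_1 \in \Z_{\geq 0}^n$ of disjoint support. Taking the classical Kac generic decomposition $\delta_0 = \beta_1 + \cdots + \beta_k$ and setting $\alpha := \delta_0 - (E^t)^{-1}\delta_1 \in \Z^n$, the disjointness hypothesis and the characterisation of virtual generic decompositions from \cite{IOTW} recalled in Section \ref{section:genericdcp} guarantee that $\alpha = \beta_1 + \cdots + \beta_k - (E^t)^{-1}\delta_1$ is precisely the virtual generic decomposition of $\alpha$. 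Proposition \ref{prop:XetCC} then yields $X(E^t\alpha) = CC(\beta_1 + \cdots + \beta_k - \delta_1) = CC(\delta_0 - \delta_1) = CC(\delta)$, so $CC(\delta) \in \mathcal G^{T}(\CC)$.

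For the reverse inclusion $\mathcal G^{T}(\CC) \subseteq \mathcal G(Q)$, I would start with a generic character $X(\gamma)$ for $\gamma \in K_0(\mathcal T)$, set $\alpha := (E^t)^{-1}\gamma \in \Z^n$, and let $\alpha = \beta_1 + \cdots + \beta_k - (E^t)^{-1}\gamma'$ be its virtual generic decomposition. A direct application of Proposition \ref{prop:XetCC} gives $X(\gamma) = X(E^t\alpha) = CC(\beta_1 + \cdots + \beta_k - \gamma') \in \mathcal G(Q)$, which finishes the proof.

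The argument is essentially purely formal once Proposition \ref{prop:XetCC} is in hand; the only subtle point, and the main obstacle if anything, is the verification that $\alpha := \delta_0 - (E^t)^{-1}\delta_1$ constructed in the first inclusion genuinely has $\beta_1 + \cdots + \beta_k - (E^t)^{-1}\delta_1$ as its virtual generic decomposition. This requires checking that the Schur roots $\beta_i$ arising from Kac's decomposition of $\delta_0 \in \Z_{\geq 0}^n$ automatically satisfy the disjoint-support condition with $\delta_1$ (immediate from $\mathrm{supp}(\beta_i) \subseteq \mathrm{supp}(\delta_0)$ and $\mathrm{supp}(\delta_0) \cap \mathrm{supp}(\delta_1) = \emptyset$), and that all the generic Ext-vanishing conditions are inherited from those of Kac's classical decomposition. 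Once these bookkeeping points are confirmed, the theorem follows at once.
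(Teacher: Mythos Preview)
Your proof is correct, and for the inclusion $\mathcal G^{T}(\CC) \subseteq \mathcal G(Q)$ it coincides exactly with the paper's argument: both simply invoke Proposition~\ref{prop:XetCC} together with the bijectivity of $E^t$ on $\Z^n$.

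For the inclusion $\mathcal G(Q) \subseteq \mathcal G^{T}(\CC)$ the two arguments diverge slightly. The paper does \emph{not} route this direction through Proposition~\ref{prop:XetCC}; instead, given $\alpha=\alpha_0-\alpha_1$ with disjoint supports, it takes the canonical projective presentation $p_M$ of a generic representation $M\in\rep(Q,\alpha_0)$, observes (via Lemma~\ref{lem:XetCCpositif}) that $CC(\alpha_0)=X(p_M)=X(E^t\alpha_0)$, and then checks directly that $p_M\oplus 0_{\alpha_1}$ is generic of index $E^t\alpha_0-\alpha_1$, whence $CC(\alpha)=X(E^t\alpha_0-\alpha_1)$. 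Your approach instead manufactures an element $\alpha'=\delta_0-(E^t)^{-1}\delta_1$ and verifies that Kac's decomposition of $\delta_0$ together with $\delta_1$ satisfies the four axioms of a virtual generic decomposition, so that Proposition~\ref{prop:XetCC} applies to $\alpha'$. This is a perfectly valid alternative: it is more uniform (the same proposition handles both inclusions) at the cost of the bookkeeping you flagged, whereas the paper's route avoids that verification but effectively reproves a special case of Theorem~\ref{theorem:multvirtual} inline. Your check that $\mathrm{supp}(\beta_i)\subseteq\mathrm{supp}(\delta_0)$ forces disjointness from $\delta_1$ is the only nontrivial ingredient, and it is sound.
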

		\begin{proof}
			$E^t$ is an invertible matrix so $E^t\alpha$ runs over $\Z^n$ when $\alpha$ runs over $\Z^n$. It thus follows from Proposition \ref{prop:XetCC} that $\mathcal G(Q) \supset \mathcal G^{T}(\CC)$. 

			Conversely, if $\alpha \in \Z^n$, we can write $\alpha=\alpha_0 - \alpha_1$ where $\alpha_0,\alpha_1 \in \Z_{\geq 0}^n$ have disjoint supports. Let $M$ be the generic representation in $\rep(Q,\alpha_0)$ and let $p_M$ be its canonical projective presentation. Then $CC(\alpha_0) = CC(M) = X(p_M) = X(E^t\alpha_0)$. Let $0_{\alpha_1}$ be the zero morphism $P(\alpha_1) \fl 0$. Then 
			$$X(E^t\alpha_0-\alpha_1) = X(p_M \oplus 0_{\alpha_1}) = X(p_M)X(0_\alpha) = CC(C)CC(P(\alpha_1)[1]) = CC(\alpha)$$
			so that $\mathcal G^T(\CC) \subset \mathcal G(Q)$. This proves the theorem.
		\end{proof}

		\begin{rmq}\label{rmq:cexgeneralpres}
			We now observe that there may exist indecomposable modules over finite-dimensional algebras which admit no generic projective presentations. Consider for instance the quiver 
			$$\xymatrix@-3ex{
				&& 2 \ar[rd] \\
				Q: & 1 \ar[ru] \ar[rr] && 3
			}$$
			and the indecomposable (non-rigid) representation
			$$\xymatrix@-3ex{
				&& \k \ar[rd]^0 \\
				M: & \k \ar[ru]^1 \ar[rr]_1 && \k.
			}$$
			Let $\CC$ be the cluster category of $Q$ and $T$ be the canonical cluster-tilting object in $\CC$. If $M$ admits a generic projective presentation $P_1^M \xrightarrow{f_M} P_0^M$ in $\kQ$-mod, then it follows from Lemma \ref{lem:genericpresentation} that $f_M$ induces a generic morphism in $\Hom_{\CC}(P_1^M,P_0^M)$ and thus $X_M = X^{T}_{\cone(f_M)} = X(f_M) \in \mathcal G^{T}(\CC)$ but it is known that $X_M \not \in \mathcal G(Q)$ (see the proof of \cite[Lemma 5.4]{Dupont:genericvariables} for details on this last fact).
		\end{rmq}

	\subsection{Generic bases in acyclic cluster algebras}
		Combining with known results on classical generic variables, we get~:
		\begin{corol}\label{corol:acyclic}
			Let $\CC=\CC_Q$ be the cluster category of an acyclic quiver $Q$ and $T=\kQ$ be the canonical cluster-tilting object in $\CC$. Then $\mathcal G^{T}(\CC)$ is a $\Z$-linear basis in $\mathcal A(Q,\mathbf x)$.
		\end{corol}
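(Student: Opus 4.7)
The plan is to reduce the statement to the corresponding fact for the classical generic variables $\mathcal G(Q)$ via Theorem \ref{theorem:generalisation}. That theorem identifies $\mathcal G^{T}(\CC)$ with the set $\mathcal G(Q)$ of \cite{Dupont:genericvariables}, so once this identification is in hand the conclusion follows by invoking the basis theorems already available in the literature.

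More precisely, I would first apply Theorem \ref{theorem:generalisation} to obtain the equality of sets $\mathcal G^{T}(\CC) = \mathcal G(Q)$ inside $\Z[\mathbf x^{\pm 1}]$. Then I would quote the fact, proved in \cite{Dupont:BaseAaffine} for quivers of affine type and extended to all acyclic quivers in \cite{DXX:basesv3,GLS:generic}, that $\mathcal G(Q)$ is a free generating set of $\mathcal A(Q,\mathbf x)$ as a $\Z$-module. Combining these two statements yields the corollary. Note that the parametrisation of $\mathcal G(Q)$ by denominator vectors in $\Z^n$ plays no role here; what matters is only the linear independence and the spanning property inside $\mathcal A(Q,\mathbf x)$.

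There is no genuine obstacle: the heavy lifting has already been carried out both in Section \ref{section:XetCC} (for the identification $\mathcal G^{T}(\CC) = \mathcal G(Q)$) and in the cited papers (for the basis property of $\mathcal G(Q)$). The only point to be careful about is to check that the elements produced in the two constructions really are the same elements of $\Z[\mathbf x^{\pm 1}]$ — but this is exactly the content of Proposition \ref{prop:XetCC} and Theorem \ref{theorem:generalisation}, so nothing further is needed.
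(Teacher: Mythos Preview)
Your proposal is correct and follows exactly the paper's own argument: apply Theorem \ref{theorem:generalisation} to identify $\mathcal G^{T}(\CC)$ with $\mathcal G(Q)$, then invoke \cite{GLS:generic} (and \cite{Dupont:BaseAaffine,DXX:basesv3} in the affine case) for the basis property of $\mathcal G(Q)$.
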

		\begin{proof}
			Let $Q$ be an acyclic quiver, $\CC=\CC_Q$ and $T=\kQ$. In \cite{GLS:generic} (see also \cite{Dupont:BaseAaffine,DXX:basesv3} for affine quivers) the authors proved that $\mathcal G(Q)$ is a $\Z$-basis in $\mathcal A(Q,\mathbf x)$. The corollary thus follows from Theorem \ref{theorem:generalisation}. 
		\end{proof}

\section{An explicit example}\label{section:example}
	Consider the quiver $Q: 1 \fl 2 \fl 3$ and let $\CC$ be the cluster category of $Q$. The Auslander-Reiten quiver of $\CC$ can be depicted as follows~:
	$$\xymatrix@-3ex{
		&& P_1[1] \ar[rd] && *+[F]{P_1} \ar[rd] && P_3[1] \ar[rd] && *+[F]{P_3} \ar[rd] \\
		& P_2[1] \ar[ru] \ar[ru] \ar[rd]&& P_2 \ar[ru] \ar[ru] \ar[rd]&& I_2 \ar[ru]\ar[rd] && P_2[1] \ar[ru] \ar[rd] && P_2 \ar[rd] \\
		P_3[1] \ar[ru] && *+[F]{P_3}  \ar[ru] && S_2 \ar[ru] && *+[F]{S_1} \ar[ru] && P_1[1] \ar[ru] && *+[F]{P_1}
	}$$
	We choose a cluster-tilting object $T=\mu_{P_2}(\kQ)=P_3 \oplus S_1 \oplus P_1$ and we denote by $T_1=P_3$, $T_2=S_1$ and $T_3=P_1$ its indecomposable summands. The quiver $Q_T$ of the cluster-tilted algebra $\End_{\mathcal C}(T)^{\op}$ is thus the following~:
	$$\xymatrix@-3ex{
		&& T_2 \ar[ld] \\
		Q_T : &T_3 \ar[rr] && T_1. \ar[lu]
	}$$

	Let $\gamma\in K_0(\add T)$. Identifying $K_0(\add T)$ with $\Z^3$, we write $\gamma=(a,b,c)$ for $a,b,c \in \Z$. It follows from Theorem \ref{theorem:stability} that in order to compute $X(\gamma)$ we have to compute the image under $X$ of the cone of a generic morphisms in $\Hom_{\CC}(T_1^{a_1} \oplus T_2^{b_1} \oplus T_3^{c_1}, T_1^{a_0} \oplus T_2^{b_0} \oplus T_3^{c_0})$ where $\gamma_1=(a_1,b_1,c_1)$ and $\gamma_0=(a_0,b_0,c_0)$ have disjoint support and $\gamma = \gamma_0 - \gamma_1$. Thus, we have to the following cases~:

	\subsection*{The case where $\gamma=(a,b,c)$, with $a,b,c \geq 0$}
		In this case, $X(\gamma)$ is given by the image of a generic morphism in $\Hom_{\CC}(0,T_1^a \oplus T_2^b \oplus T_3^c)=0$. Thus, the cone of the (generic) zero morphism is $T_1^a \oplus T_2^b \oplus T_3^c$. It follows that $X(a,b,c)$ is a cluster monomial in the cluster $\ens{X^T_{T_1}, X^T_{T_2}, X^T_{T_3}}$ of $\mathcal A(Q_T,\mathbf x)$.

	\subsection*{The case where $\gamma=(-a,-b,-c)$, with $a,b,c \geq 0$}
		In this case, $X(\gamma)$ is given by the image of a generic morphism in $\Hom_{\CC}(T_1^a \oplus T_2^b \oplus T_3^c,0)=0$. Thus, the cone of the (generic) zero morphism is $T_1^a[1] \oplus T_2^b[1] \oplus T_3^c[1]$. It follows that $X(a,b,c)$ is a cluster monomial in the initial cluster $\ens{x_1, x_2, x_3}$ of $\mathcal A(Q_T,\mathbf x)$.

	\subsection*{The case where $\gamma=(-a,-b,c)$, with $a,b,c \geq 0$}
		In this case, $X(\gamma)$ is given by the image of a generic morphism in $\Hom_{\CC}(T_1^a \oplus T_2^b, T_3^c)$. Let $f$ be a generic morphism in $\Hom_{\CC}(T_1^a \oplus T_2^b, T_3^c)$. Note that there are no morphisms from $T_2 \simeq S_1$ to $T_3 \simeq P_1$ in $\mathcal C$ so that we only have to compute the cone of a generic morphism in $\Hom_{\CC}(T_1^a, T_3^c) \simeq \Hom_{\kQ}(P_3^a, P_1^c)$. A generic morphism in $\Hom_{\kQ}(P_3^a, P_1^c)$ has a zero kernel if $a \leq c$ and has a kernel isomorphic to $P_3^{a-c}$ if $c \leq a$. Similarly, the cokernel is isomorphic to $I_2^c$ if $c \leq a$ and $I_2^c \oplus P_1^{c-a}$ if $a \leq c$. We know that the cone of a morphism $g$ of $\kQ$-modules in $\mathcal C$ is given by $\ker (g) [1] \oplus \coker(g)$. Thus, if $g$ is a generic morphism in $\Hom_{\kQ}(P_3^a, P_1^c)$, we get the triangles
		$$P_3^a \xrightarrow{g} P_1^c \fl I_2^a \oplus P_1^{c-a} \fl P_3^a[1] \textrm{ if } a \leq c,$$
		and 
		$$P_3^a \xrightarrow{g} P_1^c \fl I_2^c \oplus P_3^{a-c}[1] \fl P_3^a[1] \textrm{ if } c \leq a$$
		Since $S_1[1] \simeq \tau S_1 \simeq S_2$ in $\CC$, we obtain the following triangles in the cluster category $\CC$~:
		$$P_3^a \oplus S_1^b \xrightarrow{[g,0]} P_1^c \fl I_2^a \oplus P_1^{c-a} \oplus S_2^b \fl P_3^a[1] \oplus S_2^b \textrm{ if } a \leq c,$$
		and 
		$$P_3^a \oplus S_1^b \xrightarrow{[g,0]} P_1^c \fl I_2^c \oplus P_3^{a-c}[1] \oplus S_2^b \fl P_3^a[1] \oplus S_2^b  \textrm{ if } c \leq a$$
		and $[g,0]$ is generic in $\Hom_{\CC}(T_1^a \oplus T_2^b, T_3^c)$. 
	
		Thus, $X(-a,-b,c)$ is a cluster monomial in the cluster $\ens{X^T_{P_1},X^T_{I_2},X^T_{S_2}}$ if $a \leq c$ and in the adjacent cluster $\ens{X^T_{P_3[1]},X^T_{I_2},X^T_{S_2}}$ if $c \leq a$.

	\subsection*{The case where $\gamma=(-a,b,-c)$, with $a,b,c \geq 0$}
		In this case, $X(\gamma)$ is given by the image of a generic morphism in $\Hom_{\CC}(T_1^a \oplus T_3^c, T_2^b)$. As before, we compute that the cone of such a generic morphism $f$ is given by~:
		$$\cone(f) = \left\{\begin{array}{ll}
			P_3^a[1] \oplus P_2^c[1] \oplus S_1^{b-c} & \textrm{ if } c \leq b, \\
			P_3^a[1] \oplus P_2^b[1] \oplus P_1^{c-b}[1] & \textrm{ if } b \leq c.
		\end{array}\right.$$
		So that $X(-a,b,-c)$ is a cluster monomial in the cluster $\ens{X^T_{P_3[1]},X^T_{P_2[1]},X^T_{S_1}}$ if $c \leq b$ and in the adjacent cluster $\ens{X^T_{P_3[1]},X^T_{P_2[1]},X^T_{P_1[1]}}$ if $b \leq c$.

	\subsection*{The case where $\gamma=(a,-b,-c)$, with $a,b,c \geq 0$}
		In this case, $X(\gamma)$ is given by the image of a generic morphism in $\Hom_{\CC}(T_2^b \oplus T_3^c, T_1^a)$. The cone of such a generic morphism $f$ is given by~:
		$$\cone(f) = \left\{\begin{array}{ll}
			P_1^c[1] \oplus P_2^b \oplus P_3^{a-b} & \textrm{ if } b \leq a, \\
			P_1^c[1] \oplus P_2^a \oplus S_2^{b-a} & \textrm{ if } a \leq b.
		\end{array}\right.$$
		So that $X(a,-b,-c)$ is a cluster monomial in the cluster $\ens{X^T_{P_1[1]},X^T_{P_2},X^T_{P_3}}$ if $b \leq a$ and in the adjacent cluster $\ens{X^T_{P_1[1]},X^T_{P_2},X^T_{S_2}}$ if $a \leq b$.

	\subsection*{The case where $\gamma=(-a,b,c)$, with $a,b,c \geq 0$}
		In this case, $X(\gamma)$ is given by the image of a generic morphism in $\Hom_{\CC}(T_1^a, T_2^b \oplus T_3^c)$. The cone of such a generic morphism $f$ is given by~:
		$$\cone(f) = \left\{\begin{array}{ll}
			S_1^b \oplus I_2^a \oplus P_1^{c-a} & \textrm{ if } c \leq a, \\
			S_1^b \oplus I_2^c \oplus P_3^{a-c}[1] & \textrm{ if } a \leq c.
		\end{array}\right.$$
		So that $X(-a,b,c)$ is a cluster monomial in the cluster $\ens{X^T_{S_1},X^T_{I_2},X^T_{P_1}}$ if $c \leq a$ and in the adjacent cluster $\ens{X^T_{S_1},X^T_{I_2},X^T_{P_3[1]}}$ if $a \leq c$.

	\subsection*{The case where $\gamma=(a,-b,c)$, with $a,b,c \geq 0$}
		In this case, $X(\gamma)$ is given by the image of a generic morphism in $\Hom_{\CC}(T_2^b, T_1^a \oplus T_3^c)$. The cone of such a generic morphism $f$ is given by~:
		$$\cone(f) = \left\{\begin{array}{ll}
			P_1^c \oplus P_2^b \oplus P_3^{a-b} & \textrm{ if } b \leq a, \\
			P_1^c \oplus P_2^a \oplus S_2^{b-a} & \textrm{ if } a \leq b.
		\end{array}\right.$$
		So that $X(a,-b,c)$ is a cluster monomial in the cluster $\ens{X^T_{P_1},X^T_{P_2},X^T_{P_3}}$ if $b \leq a$ and in the adjacent cluster $\ens{X^T_{P_1},X^T_{P_2},X^T_{S_2}}$ if $a \leq b$.

	\subsection*{The case where $\gamma=(a,b,-c)$, with $a,b,c \geq 0$}
		In this case, $X(\gamma)$ is given by the image of a generic morphism in$\Hom_{\CC}(T_3^c, T_1^a \oplus T_2^b)$. The cone of such a generic morphism $f$ is given by~:
		$$\cone(f) = \left\{\begin{array}{ll}
			P_3^a \oplus P_2^c[1] \oplus S_1^{b-c} & \textrm{ if } c \leq b, \\
			P_3^a \oplus P_2^b[1] \oplus P_1^{c-b}[1] & \textrm{ if } b \leq c.
		\end{array}\right.$$
		So that $X(a,b,-c)$ is a cluster monomial in the cluster $\ens{X^T_{P_3},X^T_{P_2[1]},X^T_{S_1}}$ if $c \leq b$ and in the adjacent cluster $\ens{X^T_{P_3},X^T_{P_2[1]},X^T_{P_1[1]}}$ if $b \leq c$.

	It thus follows that $\mathcal G^T(\CC) = \mathcal M(Q_T,\mathbf x)$, illustrating Theorem \ref{theorem:finitetype}.

\section{Conjectures}\label{section:conjectures}
	\subsection{Generic bases}
		The motivation for introducing generic cluster characters is to construct $\Z$-bases in cluster algebras. Corollaries \ref{corol:basetypefini} and \ref{corol:acyclic} provide evidences for the following conjecture~:
		\begin{conj}
			Let $\CC$ be a Hom-finite triangulated 2-Calabi-Yau category such that cluster-tilting subcategories form a cluster structure. Let $T$ be a cluster-tilting object in $\CC$ and assume that $\CC$ has constructible cones with respect to $\add T$-morphisms. Then, $\mathcal G^T(\CC)$ is a $\Z$-linear basis in $\mathcal A(Q_T,\mathbf x)$.
		\end{conj}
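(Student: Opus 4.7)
The plan is to establish the three standard properties of a $\Z$-basis separately: the containment $\mathcal G^T(\CC) \subset \mathcal A(Q_T,\mathbf x)$, $\Z$-linear independence, and $\Z$-spanning. By construction each $X(\gamma)$ already lies in $\Z[\mathbf x^{\pm 1}]$, so the first task is simply to show that $X(\gamma)$ actually belongs to the cluster subalgebra. The strategy is to realise it as a Laurent polynomial in every cluster $\mathbf x_{T'}$ reachable from $\mathbf x$, which by the Laurent phenomenon would place it in the upper cluster algebra; when $\gamma$ is the index of a rigid reachable object, Theorem \ref{theorem:clustermonomials} gives this for free, and for arbitrary $\gamma$ one should use the stability theorem together with Palu's mutation formula to compare $X(\gamma)$ written in two adjacent clusters. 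Equality of cluster algebra and upper cluster algebra in the 2-Calabi-Yau tilted setting, already established in several important classes, then closes this step.

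For linear independence, the plan is to use the $\mathbf g$-vector parametrisation. Extending Proposition \ref{prop:gvector} from the cluster category of an acyclic quiver to the general 2-Calabi-Yau setting, one defines $\mathbf g_{X(\gamma)} = -\coind_{\mathcal T}(\cone(f))$ for a generic morphism of index $\gamma$, and shows that the Laurent expansion of $X(\gamma)$ in $\mathbf x$ contains the monomial $\mathbf x^{\mathbf g_{X(\gamma)}}$ with coefficient $1$, all other monomials being strictly smaller in the standard partial order on $\Z^n$. Combined with the fact (to be proved) that $\gamma \mapsto \mathbf g_{X(\gamma)}$ is injective on $K_0(\mathcal T)$ — itself a consequence of the Stability Theorem \ref{theorem:stability} and the correspondence of Lemma \ref{lem:genericpresentation} between generic $\mathcal T$-morphisms and general presentations of $B_T$-modules — a standard triangularity argument then forces the $X(\gamma)$ to be $\Z$-linearly independent.

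Spanning is the most delicate step. The natural approach is to write $\mathcal A(Q_T,\mathbf x) = \bigcup_{T'} \mathcal A(Q_T,\mathbf x) \cap \Z[\mathbf x_{T'}^{\pm 1}]$, where $T'$ ranges over cluster-tilting objects reachable from $T$, and to show by induction on the number of negative exponents that every Laurent monomial $\mathbf x_{T'}^{\mathbf a}$ lying in the cluster algebra is a $\Z$-linear combination of generic characters. When $\mathbf a \geq 0$ this is a cluster monomial and Theorem \ref{theorem:clustermonomials} applies. For mixed signs one would like to invoke a multiplicative formula generalising Theorem \ref{theorem:multvirtual}: namely, if $\gamma = \gamma_1 + \gamma_2$ is an ``index decomposition'' such that a generic extension between the two associated general presentations vanishes, then $X(\gamma) = X(\gamma_1) X(\gamma_2)$. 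The inductive step would then peel off factors corresponding to indecomposable $T_i[1]$'s and use the cluster monomial case.

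The main obstacle is precisely to make sense of, and then prove, this generalised multiplicativity and the spanning it is supposed to support. In the hereditary (acyclic) case it is provided by the virtual generic decomposition of \cite{IOTW}; for an arbitrary 2-Calabi-Yau tilted algebra $B_T$, the corresponding generic representation theory of general presentations developed in \cite{DF:generalpresentations} is much less complete, in particular with regard to disjointness of support implying generic vanishing of $\Ext^1$. A reasonable route would be to replace representation-theoretic arguments by purely categorical ones inside $\CC$: use the cluster structure to produce, for any $\gamma$, a cluster-tilting $T'$ such that the generic cone of index $\gamma$ becomes reachable from $T'$, and transfer the information back to $T$ via Palu's mutation formula for cluster characters. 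Carrying this transfer out while controlling all the $\mathbf g$-vectors simultaneously is where the real technical difficulty lies, and is the reason the statement is posed here only as a conjecture.
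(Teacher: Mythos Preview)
The statement you are attempting to prove is labelled in the paper as a \emph{conjecture}, and the paper offers no proof of it. The only content the paper provides in support of the statement is the remark that Corollaries~\ref{corol:basetypefini} and~\ref{corol:acyclic} (the finite-type case and the acyclic case with the canonical cluster-tilting object) furnish evidence for it. There is therefore nothing in the paper against which to compare your argument.

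Your proposal is honest about this: you yourself conclude that the spanning step cannot currently be carried out and that ``the statement is posed here only as a conjecture.'' So what you have written is not a proof but an outline of a possible strategy, with the decisive step (a multiplicativity/decomposition result generalising Theorem~\ref{theorem:multvirtual} beyond the hereditary case, or an equivalent categorical transfer argument) left open. That is a fair assessment of the state of affairs, and your identification of the obstruction --- the absence, for a general 2-Calabi-Yau tilted algebra $B_T$, of a virtual generic decomposition theory comparable to \cite{IOTW} --- is exactly the point. I would also flag two further places where your sketch is optimistic: the equality of the cluster algebra with the upper cluster algebra is not known in the generality of the conjecture, so your proposed route to $X(\gamma)\in\mathcal A(Q_T,\mathbf x)$ already requires an additional hypothesis; and the injectivity of $\gamma\mapsto\mathbf g_{X(\gamma)}$ that you invoke for linear independence is not established in the paper outside the acyclic module case (Proposition~\ref{prop:gvector}) and would itself need proof. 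In short, the paper does not prove the conjecture, and neither does your proposal; both agree that it remains open.
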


	\subsection{Invariance under mutation}
		We conjecture that the set of generic characters is invariant under mutations. More precisely, let $\CC$ be a Hom-finite triangulated 2-Calabi-Yau category such that cluster-tilting subcategories form a cluster structure. Let $T$ be a cluster-tilting object in $\CC$ and let $T'$ be a cluster-tilting object in $\CC$ which is reachable from $T$. Let $\mathbf x=\ens{x_1, \ldots, x_n}$ be the initial cluster in the cluster algebra $\mathcal A(Q_T,\mathbf x)$ and $\mathbf x'=\ens{x_1', \ldots, x_n'}$ be the initial cluster in the cluster algebra $\mathcal A(Q_{T'},\mathbf x')$. We denote by 
		$$\phi : \Q(x_1, \ldots, x_n) \xrightarrow{\sim} \Q(x_1', \ldots, x_n')$$
		the isomorphism sending $x_i$ to $x_i'$ for any $i = 1, \ldots, n$. It induces an isomorphism of $\Z$-algebras between the cluster algebras $\mathcal A(Q_T,\mathbf x)$ and $\mathcal A(Q_{T'},\mathbf x')$ and this isomorphism preserves cluster monomials so that we have a commutative diagram~:
		$$\xymatrix{
			\mathcal M(Q_T,\mathbf x) \ar[r]^{1:1}_{\phi} \ar@{^{(}->}[d] & \mathcal M(Q_{T'},\mathbf x') \ar@{^{(}->}[d] \\
			\Q(x_1, \ldots, x_n) \ar[r]^{\sim}_{\phi} & \Q(x_1', \ldots, x_n') 
		}$$
		We also know from Theorem \ref{theorem:clustermonomials} that the set of cluster monomials is a subset of the set of generic characters. 

		We actually conjecture~:
		\begin{conj}
			With the above notations, if $\CC$ has constructible cones with respect to $\add T$-morphisms and to $\add T'$-morphisms, then the following diagram commutes~:
			$$\xymatrix{
				\mathcal M(Q_T,\mathbf x) \ar[r]^{1:1}_{\phi} \ar@{^{(}->}[d] & \mathcal M(Q_{T'},\mathbf x') \ar@{^{(}->}[d] \\
				\mathcal G^{T}(\CC) \ar@{-->}[r]^{1:1}_{\phi} \ar@{^{(}->}[d] & \mathcal G^{T'}(\CC) \ar@{^{(}->}[d] \\
				\Q(x_1, \ldots, x_n) \ar[r]^{\sim}_{\phi} & \Q(x_1', \ldots, x_n') 
			}$$
		\end{conj}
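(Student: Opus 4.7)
The strategy is to reduce the conjecture to a single mutation $T'=\mu_k(T)$ and combine two ingredients: (i) an objectwise compatibility of cluster characters with mutation, namely $\phi(X^{T}_M)=X^{T'}_M$ for every $M\in\Ob(\CC)$, and (ii) a compatibility of the generic morphism spaces attached to $T$ and $T'$. Ingredient (i) for a single mutation comes from a direct computation with the exchange triangles at $T_k\leftrightarrow T_k^*$, in the spirit of the arguments of \cite{Palu,FK} used to establish the mutation formula for cluster characters~; for general reachable $T'$ it follows by induction. Granted (i), any generic character $X(\gamma)\in\mathcal G^{T}(\CC)$ equals $X^{T}_{M_\gamma}$ with $M_\gamma=\cone(f_\gamma)$ for a generic morphism $f_\gamma$ of index $\gamma$, so
$$\phi(X(\gamma))=\phi(X^{T}_{M_\gamma})=X^{T'}_{M_\gamma}.$$
The conjecture then reduces to identifying $X^{T'}_{M_\gamma}$ with a generic character relative to $T'$.

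To achieve this, I would introduce the piecewise-linear bijection $\nu : K_0(\add T)\fl K_0(\add T')$ prescribed by index mutation at $k$, and check that $\nu(\ind_{\T}(M))=\ind_{\T'}(M)$ for every $M\in\CC$, which is a direct consequence of the exchange triangles and the additivity of $\ind_{\T}$ on triangles of \cite[Lemma 2.1]{Palu}. Applied to $M=M_\gamma$, this gives $\ind_{\T'}(M_\gamma)=\nu(\gamma)$. The remaining task is to establish that on a dense open subset of $\Hom_{\CC}(T_1^{\min}(\gamma),T_0^{\min}(\gamma))$ the cone $M_\gamma$ is also isomorphic to $\cone(f')$ for some $f'$ in the generic open locus $U_{(T_1^{\prime\,\min}(\nu(\gamma)),T_0^{\prime\,\min}(\nu(\gamma)))}$ of Lemma \ref{lem:Xgen} applied to $T'$. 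Equivalently, the locus of $\add T$-morphisms whose cones are isomorphic to a generic $\add T'$-cone of index $\nu(\gamma)$ must be Zariski dense. Given this density, one immediately concludes
$$\phi(X(\gamma))=X^{T'}_{M_\gamma}=X'(\nu(\gamma))\in\mathcal G^{T'}(\CC),$$
so $\phi$ restricts to a well-defined map $\mathcal G^{T}(\CC)\fl\mathcal G^{T'}(\CC)$, and swapping the roles of $T$ and $T'$ makes it a bijection fitting into the required commutative diagram.

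The main obstacle is precisely this last density statement, which is the categorical counterpart of the fact that mutation at $k$ transforms general presentations into general presentations via the tropical rule for $\delta$-vectors. Concretely, one must analyse how the functor $f\mapsto\cone(f)$ behaves under the change of $\add T$-approximating system induced by mutation, ruling out that ``being the cone of a generic $\add T$-morphism of index $\gamma$'' is strictly more restrictive than ``being the cone of a generic $\add T'$-morphism of index $\nu(\gamma)$''. In the acyclic case this can plausibly be unwound via the transformation rules for virtual generic decompositions from \cite[\S 6]{IOTW} combined with Theorem \ref{theorem:multvirtual}; in the general Jacobi-finite case one would translate the problem through Lemma \ref{lem:genericpresentation} to a statement about mutations of general presentations of Jacobian algebras, for which the Derksen--Weyman--Zelevinsky mutation framework for $\delta$-vectors should provide the necessary input. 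I expect this to be where the bulk of the technical work is, since nothing in the existing cluster-structure axioms directly guarantees that genericity is preserved under the cone/presentation correspondence, so a new categorical or algebraic input relating generic morphisms for mutation-equivalent cluster-tilting objects is required.
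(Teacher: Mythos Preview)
This statement is a \emph{conjecture} in the paper, not a theorem; the paper offers no proof. The only evidence the paper provides is the remark that Theorem~\ref{theorem:finitetype} settles the finite-type case, since there $\mathcal G^{T}(\CC)=\mathcal M(Q_T,\mathbf x)$ and $\mathcal G^{T'}(\CC)=\mathcal M(Q_{T'},\mathbf x')$, so the middle row collapses to the top row. There is therefore nothing to compare your proposal against.

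That said, your outline is a reasonable attack and you correctly identify its own gap: the density statement asserting that a generic $\add T$-cone of index $\gamma$ is also a generic $\add T'$-cone of index $\nu(\gamma)$. This is exactly the content of the conjecture, and your proposal does not prove it; you reduce to it and then speculate on possible inputs (virtual generic decomposition in the acyclic case, DWZ mutation of presentations in the Jacobi-finite case). One further issue: the identity $\phi(X^{T}_M)=X^{T'}_M$ for \emph{all} objects $M$ that you invoke as ingredient~(i) is not established in the references you cite; \cite{Palu,FK} give the multiplication/exchange formula needed for \emph{rigid} (reachable) objects, but the extension to arbitrary $M$ is itself nontrivial and essentially equivalent to what you are trying to prove. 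So your reduction is circular unless you can supply an independent argument for~(i), or bypass it by working directly with the comparison of generic loci.
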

		For instance, Theorem \ref{theorem:finitetype} proves that this conjecture holds in finite type.

\section*{Acknowledgments}
	This paper was written while the author was at the university of Sherbrooke as a CRM-ISM postdoctoral fellow under the supervision of Ibrahim Assem, Thomas Br\"ustle and Virginie Charette. The author would like to thank Bernhard Keller for interesting discussions on this topic. He would also like to thank an anonymous referee for his interesting suggestions and comments.


\newcommand{\etalchar}[1]{$^{#1}$}

\end{document}